\journal{SIAM Journal on Applied Mathematics}
\newtheorem{theorem}{Theorem}[section]
\newtheorem{lemma}{Lemma}[section]
\newtheorem{remark}{Remark}[section]
\numberwithin{equation}{section}
\numberwithin{figure}{section}
\numberwithin{table}{section}
\def\XXint#1#2#3{{\setbox0=\hbox{$#1{#2#3}{\int}$}
\vcenter{\hbox{$#2#3$}}\kern-.51\wd0}}
\begin{document}

\setlength{\pdfpageheight}{\paperheight}
\setlength{\pdfpagewidth}{\paperwidth}
\title{Supervised Optimal Transport}

\author{Zixuan Cang}
\address{Department of Mathematics, North Carolina State University, Raleigh, North Carolina, 27695}
\author{Qing Nie\fnref{myfootnote1}}
\address{Department of Mathematics, The NSF-Simons Center for Multiscale Cell Fate Research, University of California, Irvine, Irvine, California, 92697}
\author{Yanxiang Zhao\fnref{myfootnote2}}
\address{Department of Mathematics, The George Washington University, Washington D.C., 20052}
\fntext[myfootnote1]{Corresponding author: qnie@uci.edu}
\fntext[myfootnote2]{Co-corresponding author: yxzhao@email.gwu.edu}

\begin{abstract}

Optimal Transport, a theory for optimal allocation of resources, is widely used in various fields such as astrophysics, machine learning, and imaging science. However, many applications impose elementwise constraints on the transport plan which traditional optimal transport cannot enforce. Here we introduce Supervised Optimal Transport (sOT) that formulates a constrained optimal transport problem where couplings between certain elements are prohibited according to specific applications.  sOT is proved to be equivalent to an $l^1$ penalized optimization problem, from which efficient algorithms are designed to solve its entropy regularized formulation. We demonstrate the capability of sOT by comparing it to other variants and extensions of traditional OT in color transfer problem. We also study the barycenter problem in sOT formulation, where we discover and prove a unique reverse and portion selection (control) mechanism. Supervised optimal transport is broadly applicable to applications in which constrained transport plan is involved and the original unit should be preserved by avoiding normalization.

\end{abstract}

\begin{keyword}
constrained transport plan, infinity cost matrix, unnormalized marginal distributions, entropic regularization, Dykstra algorithm.
\end{keyword}

\date{\today}
%\date{\vspace{-5ex}}
\maketitle

%%%%%%%%%%%%%%%%%%%%%%%%%%%%%%%%%%%%%%%%%%%%%%%%%%%%%%%%%%%%%%%%%%%%%%%%%%%%%%%%%%%
\section{Introduction}

Optimal transport (OT) is a powerful tool for geometrically comparing and connecting measures. It seeks a globally optimal coupling between two probability distributions that minimizes the total coupling cost given a predefined finite cost \cite{Monge_1781,Kantorovitch_1942,Brenier_1991,Villani_2003}.
OT has been successfully applied in many fields recently such as astrophysics \cite{Frisch_2012}, machine learning \cite{Cuturi_2013,arjovsky2017wasserstein,courty2016optimal}, and imaging science \cite{Ferradans_2014,bauer2015diffeomorphic,karlsson2017generalized}. 
The original OT is a linear programming problem which has a computational complexity of $O(n^3)$ \cite{Villani_2003}. Recently, significant advancements in OT computation have been made which enables the application of OT to large scale practical problems, for example, the Sinkhorn algorithm \cite{Cuturi_2013}, Greenkhorn algorithm \cite{altschuler2017near}, and others \cite{cuturi2014fast,pmlr-v108-guo20a,dvurechensky2018computational,lin2019efficient,guo2020fast}.

However, there are limitations of OT that hinder its application to many problems, leading to several variants and extensions of OT. For example, unbalanced optimal transport was introduced to couple non-probability measures and reduce noise in transport plan by replacing the original marginal constraints by soft divergence constraints \cite{chizat2018unbalanced,Chizat_2018}. Partial optimal transport generalizes OT to optimize the transport plan under the condition that a given fraction of mass is transported \cite{Benamou_2015,bonneel2019spot,chapel2020partial}. From the dynamics model perspective, unnormalized OT was introduced to derive the transport dynamics between two marginals of different total mass with an external spatial-dependent or spatiotemporal-dependent mass source \cite{Ganbo_2019,Lee_2021}. In summary, these OT variants relax the marginal mass conservation constraint in the original OT to handle problems where the total masses of the two marginals do not match. 

Another major limitation of OT is that there are natural constraints on the transport plans in many applications which cannot be handled by current OT methods. For instance, when ground transportation is blocked after a major natural disaster, many locations with a demand for resources cannot be safely reached by certain supply distribution locations. In the corresponding optimal transport formulation, there should be constraints on the transport plan, causing some entries in the transport plan being occluded as zero. This leads to a challenging optimal transport problem since the total possible transported mass becomes an unknown due to the elementwise blockages in the transport plan.

Here, we introduce supervised optimal transport (sOT) which supervises the transport plan by enforcing a given elementwise constraint on the transport plan. sOT optimizes both the total transported mass and the transport plan simultaneously.  Different from the OT problems with prescribed inequality constraints \cite{Benamou_2015}, the inequality constraints in sOT, arising due to the infinity entries in the cost matrix, is implicitly determined through the optimization of the transport plan. We show that sOT can be equivalently reformulated and link to the unbalanced OT framework \cite{Chizat_2018}. We further extend the standard OT barycenter problem into sOT barycenter problem, in which an interesting and novel {\it reverse and portion selection mechanism}  is discovered. We propose several new numerical methods for entropy regularized sOT based on Dykstra iteration.

We validate sOT and the proposed numerical algorithms in several numerical experiments. By applying it to an important problem in imaging science, the color transfer problem, we show the benefit and unique capability of sOT over other variants and extensions of traditional OT. More importantly, we prove the reverse and portion selection mechanism for the sOT barycenter problem, which is further validated in detail by numerical examples.

\section{Supervised optimal transport}

In this section, we define the supervised optimal transport (sOT) and derive an equivalent formulation upon which efficient algorithms are derived.

\subsection{Definition of sOT}

Let $\mathbb{R}^n_{+}$ (and $\mathbb{R}^n_{++}$, respectively) denote the $n$-dimensional nonnegative (and positive, respectively) vector space. We define the probability simplex (and strictly positive probability simplex, respectively) as
\begin{align}
\Sigma^n_{+} = \Big\{ \textbf{a} = (a_i)_i\in\mathbb{R}^n_{+}: \sum_i a_i = 1\Big\}, \quad \Sigma^n_{++} = \Big\{\textbf{a} = (a_i)_i \in\mathbb{R}^n_{++}: \sum_i a_i = 1\Big\}.
\end{align}
The polytope of the couplings between $(\textbf{a},\textbf{b})\in \mathbb{R}^n_{+}\times \mathbb{R}^m_{+}$ is defined as
\[
\mathbf{U}(\textbf{a},\textbf{b}) = \big\{ \textbf{P} \in \mathbb{R}_{+}^{n\times m}: \textbf{P}\mathbbm{1} = \textbf{a}, \textbf{P}^T\mathbbm{1} = \textbf{b}  \big\},
\]
where $\textbf{P}^T$ is the transpose of $\textbf{P}$ and $\mathbbm{1}$ is the all-ones matrix. The dimension of $\mathbbm{1}$ is determined by dimension consistency of matrix multiplication in the context. We further define the following polyhedra
\begin{align*}
\mathbf{U}(\le \textbf{a},\le \textbf{b}) = \big\{ \textbf{P} \in \mathbb{R}_{+}^{n\times m}: \textbf{P}\mathbbm{1} \le \textbf{a}, \textbf{P}^T\mathbbm{1} \le \textbf{b}  \big\}, \\
\mathbf{U}(= \textbf{a},\le \textbf{b}) = \big\{ \textbf{P}\in \mathbb{R}_{+}^{n\times m}: \textbf{P}\mathbbm{1} = \textbf{a}, \textbf{P}^T\mathbbm{1} \le \textbf{b}  \big\}.
\end{align*}
We denote by $\iota_{\mathcal{C}}$ the indicator of a set $\mathcal{C}$,
\[
\iota_{\mathcal{C}}(x) =
\begin{cases}
 0, &\text{if}\ x\in \mathcal{C} \\
\infty, & \text{otherwise}.
\end{cases}
\]
For $\textbf{P} = (P_{ij})\in\mathbb{R}_+^{n\times m}$, we define its entropy as
\[
H(\textbf{P}) = -\sum_{i,j} P_{ij}(\log P_{ij} - 1),
\]
in which we use the convention $0\log0 = 0$. The Kullback-Leibler (KL) divergence between $\textbf{P} = (P_{ij})\in\mathbb{R}_+^{n\times m}$ and $\textbf{Q} = (Q_{ij})\in\mathbb{R}_{++}^{n\times m}$ is defined as
\[
\text{KL}(\textbf{P}|\textbf{Q}) = \sum_{i,j} P_{ij}\log\left( \frac{P_{ij}}{Q_{ij}} \right) - P_{ij} + Q_{ij}.
\]
For two vectors $\textbf{u} = (u_i),\textbf{v} = (v_i)$ of the same dimension, we denote entrywise multiplication and division by
\[
\mathbf{u}\odot \mathbf{v} = (u_iv_i)_i, \quad \mathbf{u}./\mathbf{v} = (u_i/v_i)_i.
\] 

For the standard Kantorovich's optimal transport problem with  discrete marginal measures $\textbf{a}, \textbf{b} \in \Sigma_{+}^n$, it reads:
\begin{align}\label{eqn:standardOT}
L_{\text{OT}}(\textbf{a},\textbf{b};\mathbf{C}) = \min_{\textbf{P}\in \textbf{U}(\textbf{a},\textbf{b})} \langle \textbf{P}, \textbf{C} \rangle.
\end{align}
In the framework of sOT, the marginal measures $(\textbf{a},\textbf{b})\in\mathbb{R}_+^n\times\mathbb{R}_+^m$ do not necessarily have the same sum, and we are interested in the cost matrix $\textbf{C} = (C_{ij})$ that contains $\infty$ entries. The $\infty$-pattern of $\textbf{C}$ is defined as the set \cite{Brualdi_2006}
\begin{align}\label{eqn:infinifty_pattern}
\mathcal{P}_{\infty}(\mathbf{C})  = \{ (i,j): C_{ij} = \infty, \quad i = 1,2,\cdots,n, \quad j = 1,2,\cdots,m \},
\end{align}
of positions of $\textbf{C}$ containing an infinity element. Similarly one can define 0-pattern of a transport plan $\textbf{P}$. By virtue of the optimal transport, the 0-pattern of the optimal plan $\textbf{P}^*$ must contain the $\infty$-pattern of $\textbf{C}$, namely, $\mathcal{P}_0(\textbf{P}^*)\supseteq \mathcal{P}_{\infty}(\textbf{C})$. We further define a feasible set $\mathcal{A}_{\textbf{C}}$ for the {\it{marginal blocked distribution}} $(\boldsymbol\upmu,\boldsymbol\upnu)$ as follows:
\begin{align}
\mathcal{A}_{\textbf{C}} = \Big\{ (\boldsymbol\upmu,\boldsymbol\upnu)\in[0,\textbf{a}]\times[0,\textbf{b}] \ \Big|\ \exists \textbf{P} \in \mathbf{U}(\textbf{a}-\boldsymbol\upmu, \textbf{b}-\boldsymbol\upnu) \text{\ such\ that\ } \langle \textbf{P}, \textbf{C} \rangle < \infty \Big\},
\end{align}
in which the inclusion $(\boldsymbol\upmu,\boldsymbol\upnu)\in[0,\textbf{a}]\times[0,\textbf{b}]$ is entrywise, namely, $\mu_i\in[0,a_i], i=1,2,\cdots n$ and $\nu_j\in[0,b_j], j=1,2,\cdots m$. 

We define sOT as the following minimization problem
\begin{align}\label{eqn:UUOT_formI}
L_{\text{sOT}}(\textbf{a},\textbf{b};\mathbf{C}) := \min_{(\boldsymbol\upmu,\boldsymbol\upnu)\in\mathcal{B}} \min_{\mathbf{P}\in \mathbf{U}(\textbf{a}-\boldsymbol\upmu, \textbf{b}-\boldsymbol\upnu)} \langle \textbf{P}, \textbf{C} \rangle 
\end{align}
where $\mathcal{B}:= \text{argmin}_{(\boldsymbol\upmu,\boldsymbol\upnu)\in\mathcal{A}_{\textbf{C}}}\  \|\boldsymbol\upmu\|_1 + \|\boldsymbol\upnu\|_1$. In other words, we aim to find the optimal transport plan $\textbf{P}$ which transports the most marginal density (blocks the least $(\boldsymbol\upmu,\boldsymbol\upnu)$) with minimal cost.

\begin{remark}
Note that when the cost matrix $\emph{\textbf{C}}$ contains $\infty$, $(\boldsymbol\upmu,\boldsymbol\upnu) = (\mathbf{0},\mathbf{0})$ may still be a feasible point in $\mathcal{A}_{\emph{\textbf{C}}}$ (and therefore in $\mathcal{B}$). For example, one can consider
\begin{align*}
\emph{\textbf{C}} =      \left[
   \begin{array}{ccc}
    1 & \infty & 1 \\
    1 & 1 & 1 \\     
    1 & 1 & 1 
   \end{array}
   \right],\ 
\emph{\textbf{a}} =      \left[
   \begin{array}{ccc}
    0.2 \\
    0.3 \\     
    0.5
   \end{array}
   \right],\   
\emph{\textbf{b}} =      \left[
   \begin{array}{ccc}
    0.4 \\
    0.3 \\     
    0.3
   \end{array}
   \right],\      
\emph{\textbf{P}} =      \left[
   \begin{array}{ccc}
    0.1 & 0 & 0.1 \\
    0.1 & 0.1 & 0.1 \\     
    0.2 & 0.2 & 0.1 
   \end{array}
   \right],
\end{align*}
then $(\boldsymbol\upmu,\boldsymbol\upnu)=(\mathbf{0},\mathbf{0})\in\mathcal{A}_{\emph{\textbf{C}}}$ since $\emph{\textbf{P}}\in \mathbf{U}(\emph{\textbf{a}},\emph{\textbf{b}})$ and $\langle \emph{\textbf{P}}, \emph{\textbf{C}}\rangle$ is finite. In this case, sOT reduces to the standard OT as $\mathbf{U}(\emph{\textbf{a}}, \emph{\textbf{b}})$ is non-empty and the minimum is reached at some optimal $\emph{\textbf{P}}^*$. 
\end{remark}

In practical applications, the cost matrix $\textbf{C}$ could be sparse with respect to the $\infty$ entries, which most likely leads to $(\boldsymbol\upmu,\boldsymbol\upnu)=(\mathbf{0},\mathbf{0})\notin\mathcal{A}_{\textbf{C}}$. On the other hand, $\mathcal{A}_{\textbf{C}}$ is non-empty as $(\boldsymbol\upmu,\boldsymbol\upnu) = (\textbf{a},\textbf{b})$ is always an element in $\mathcal{A}_{\textbf{C}}$. Therefore, we expect to find some $(\boldsymbol\upmu,\boldsymbol\upnu)\in [0,\textbf{a}]\times[0,\textbf{b}]$ with smallest $l^1$-norm, and the associated optimal transport plan $\textbf{P}^*$ over $\mathbf{U}(\textbf{a}-\boldsymbol\upmu,\textbf{b}-\boldsymbol\upnu)$.

A problem related to sOT (\ref{eqn:UUOT_formI}) is the partial transport problem \cite{Benamou_2015} which finds the optimal plan to transport a given fraction of mass instead of the total amount of marginal mass. It is formulated as follows. Given marginal densities $(\textbf{a},\textbf{b})\in\mathbb{R}_{++}^{n}\times\mathbb{R}_{++}^{m}$, not necessarily with the same total mass, the partial transport problem minimizes
\begin{align}\label{eqn:PT}
\min_{\textbf{P}\in \mathbf{U}(\le \textbf{a}, \le \textbf{b})} \{ \langle \textbf{P}, \textbf{C} \rangle: \langle \textbf{P}, \mathbbm{1} \rangle = \theta \}.
\end{align}
in which $\theta$ is the given fraction of mass. Different from partial optimal transport, sOT does not require a given fraction of mass to be transported and instead optimizes the transported mass. Specifically, sOT can be rewritten as
\begin{align}\label{eqn:UUOT_formPT}
\max_{\theta \in [0, \min(\textbf{a}^T\mathbbm{1},\textbf{b}^T\mathbbm{1})]}\min_{\textbf{P}\in \mathbf{U}(\le \textbf{a}, \le \textbf{b})} \{ \langle \textbf{P}, \textbf{C} \rangle: \langle \textbf{P}, \mathbbm{1} \rangle = \theta \}.
\end{align}
where sOT performs an extra maximization over the transported mass $\theta$. Note that if the cost matrix $\textbf{C}$ does not contain $\infty$ entries and $\textbf{a}^T\mathbbm{1}=\textbf{b}^T\mathbbm{1}$ , the reformulated problem (\ref{eqn:UUOT_formPT}) degenerates to the standard OT problem (\ref{eqn:standardOT}) since $\theta$ can be equal to the total mass, the largest possible value. On the other hand, if the cost matrix $\textbf{C}$ contains $\infty$ entries, the partial transport problem (\ref{eqn:PT}) may not be well defined for a given fraction of mass $\theta$ since the feasible set is potentially empty. In this case, sOT (\ref{eqn:UUOT_formPT}) remains wellposed as $\theta=0$ is always a feasible transported mass.

\subsection{Equivalent sOT formulations }

The sOT in the forms (\ref{eqn:UUOT_formI}) or (\ref{eqn:UUOT_formPT}) is a double optimization problem which is computationally challenging. Here, we recast it to the form of a single optimization:
\begin{align}\label{eqn:UUOT_formII}
L_{\text{sOT}}(\textbf{a},\textbf{b};\mathbf{C}) = \min_{\substack{(\boldsymbol\upmu,\boldsymbol\upnu)\in\mathcal{A}_{\textbf{C}}\\ \textbf{P}\in \mathbf{U}(\textbf{a}-\boldsymbol\upmu,\textbf{b}-\boldsymbol\upnu)}} \langle \textbf{P}, \textbf{C} \rangle + \gamma(\|\boldsymbol\upmu\|_1 + \|\boldsymbol\upnu\|_1) 
\end{align}
for sufficiently large $\gamma$ depending on $\mathbf{C}$. The equivalence between (\ref{eqn:UUOT_formI}) and (\ref{eqn:UUOT_formII}) is summarized in lemma \ref{lemma:equivalence}.

\begin{figure}[!htbp]
\centerline{
\includegraphics[width=50mm]{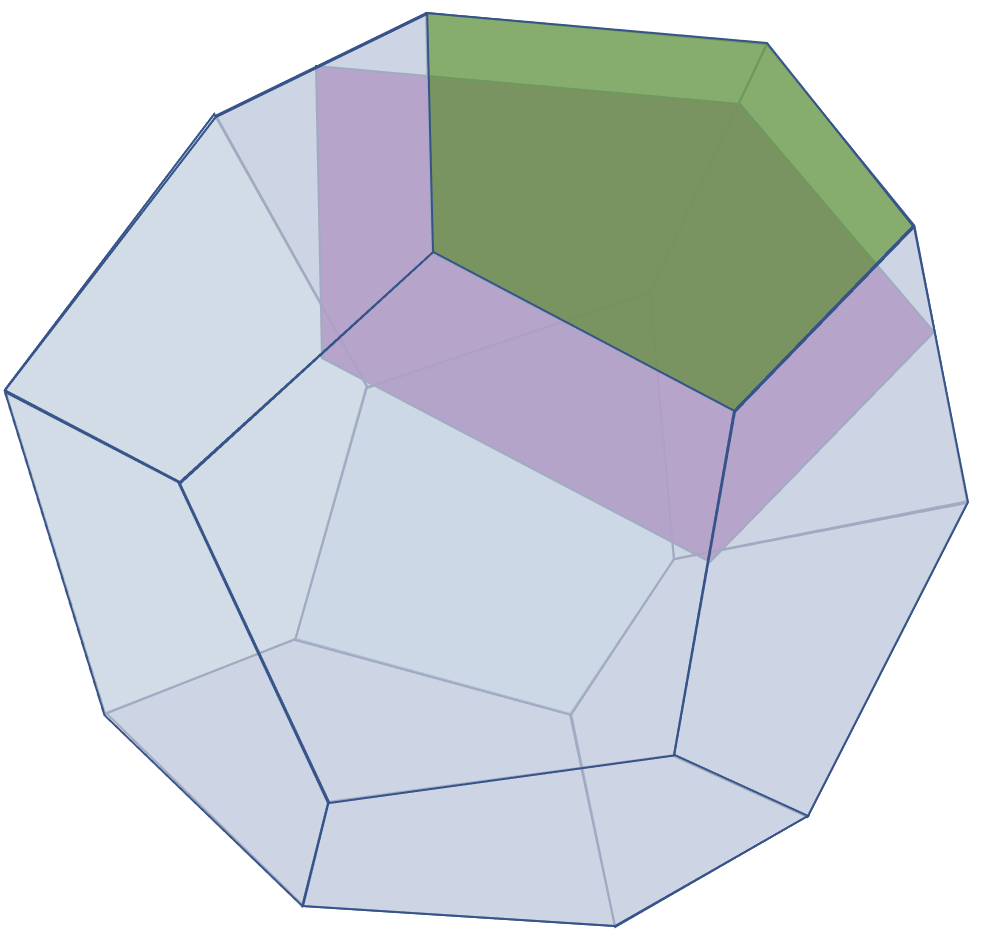}
 }
\caption{A schematic polyhedron for lemma \ref{lemma:polytope}. The green face is the hyperplane $\mathbbm{1}^Tx = \bar{\theta}$. The pink plane is the hyperplane $\mathbbm{1}^Tx = \theta$. When the pink plane is close enough to the green one, the set of extreme points $E_{J_{\theta}}$ is invariant. See the proof of lemma \ref{lemma:polytope} for the definitions of the notations.}
\label{fig:Schematic_Polytope}
\end{figure}

To begin with the proof, we need a lemma characterizing the difference between $\mathbf{P}_{\theta}$ and $\mathbf{P}_{\bar{\theta}}$ for the maximal possible transported mass $\bar{\theta}$ and a mass $\theta$ smaller than but close to $\bar{\theta}$.

%\begin{lemma}
%Let $\Theta$ be the maximal possible transported mass. There exist a critical $\theta_0<\Theta$, such that for any fixed $\theta\in(\theta_0,\Theta]$, and any $\mathbf{P}_{\theta}\in\mathbf{U}(\le\mathbf{a},\le\mathbf{b})$, there exists a $\mathbf{P}_{\Theta}$ such that $\| \mathbf{P}_{\theta} - \mathbf{P}_{\Theta} \|_{\infty}\le C|\Theta-\theta|$.
%\end{lemma}

\begin{lemma}\label{lemma:polytope}
Let $C=\{x: Ax \le b\}$ be a bounded convex polyhedron where $A = \left[a_1^T | \cdots | a_r^T \right]^T\in\mathbb{R}^{r\times n}$ and $b\in\mathbb{R}^r$. Define
\[
\bar{\theta}: = \max_{\bar{x}\in C}\ \mathbbm{1}^T\bar{x}, \quad C_{\bar{\theta}} = \underset{\bar{x}\in C}{\mathrm{argmax}}\ \mathbbm{1}^T\bar{x},
\]
and 
\[
C_{\theta} = \{x\in C : \mathbbm{1}^Tx = \theta \},
\]
for any possible value of $\theta$ that $\mathbbm{1}^Tx$ can take. Then there exists a critical $\theta_0<\bar{\theta}$, such that for any fixed $\theta\in(\theta_0, \bar{\theta} ]$, and any $x\in C_{\theta}$, there exists a $\bar{x}\in C_{\bar{\theta}}$ such that $\| x - \bar{x} \|_{1}\le \eta |\bar{\theta}-\theta|$, where $\eta$ is independent of $\theta$.
\end{lemma}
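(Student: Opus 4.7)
The plan is to exploit the combinatorial rigidity of the vertex structure of $C_\theta$ as $\theta$ approaches $\bar\theta$ from below. The key observation is that each vertex of $C_\theta$ is determined by an active index set $J\subseteq\{1,\ldots,r\}$ of cardinality $n-1$, and as $\theta$ varies, the vertex indexed by $J$ moves as an affine function of $\theta$ whose slope is a fixed direction $d_J\in\mathbb{R}^n$ with $\mathbbm{1}^T d_J = 1$. The hope is that for $\theta$ close enough to $\bar\theta$, the collection of \emph{admissible} index sets stabilizes, so one can lift a convex representation from $C_\theta$ to $C_{\bar\theta}$ by freezing the combination's coefficients while sliding each vertex back to $\bar\theta$.

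First I would introduce the parametric vertex map $v^J(\theta)$ solving $A_J v = b_J$ and $\mathbbm{1}^T v = \theta$ (where $A_J$ has row rank $n-1$, so that together with $\mathbbm{1}^T$ the active system has full rank $n$), and observe that its feasibility in $C$ is captured by the finitely many scalar affine inequalities $A_i v^J(\theta)\le b_i$ for $i\notin J$. Each such inequality flips truth value at most once in the scalar $\theta$, so the set of feasible $J$'s changes at only finitely many breakpoints. Taking $\theta_0$ to be the largest such breakpoint strictly below $\bar\theta$ over all candidate $J$, the list of admissible index sets $\{J_1,\ldots,J_K\}$ is identical on the entire interval $(\theta_0,\bar\theta]$, and each $v^{J_k}(\bar\theta)$ lies in $C_{\bar\theta}$ by closedness of $C$.

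Next, for any $x\in C_\theta$ with $\theta\in(\theta_0,\bar\theta]$, Minkowski's theorem for the bounded polytope $C_\theta$ yields a convex decomposition $x = \sum_{k=1}^K \lambda_k v^{J_k}(\theta)$ with $\lambda_k\ge 0$ and $\sum_k\lambda_k = 1$. Setting $\bar x := \sum_{k=1}^K \lambda_k v^{J_k}(\bar\theta)\in C_{\bar\theta}$ and using the affine dependence $v^{J_k}(\theta) - v^{J_k}(\bar\theta) = (\theta - \bar\theta)\, d_{J_k}$ from Cramer's rule, the triangle inequality delivers the desired bound with $\eta := \max_{1\le k\le K}\|d_{J_k}\|_1$, a constant determined by $A$ and independent of $\theta$.

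The hard part will be the stabilization step: rigorously ensuring that no admissible index set is born or dies in a left neighborhood of $\bar\theta$. The cleanest route I see is to enumerate all candidate index sets $J$ and, for each, examine the truth values of the finitely many boolean conditions $A_i v^J(\theta)\le b_i$ for $i\notin J$ near $\bar\theta$; because only finitely many candidates and finitely many constraints are involved, a uniform $\theta_0$ can always be selected, after which all other reasoning reduces to routine convex-combination bookkeeping.
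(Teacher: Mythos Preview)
Your proposal is correct and follows essentially the same route as the paper: both arguments (i) identify a threshold $\theta_0<\bar\theta$ above which the vertex structure of $C_\theta$ is combinatorially frozen, (ii) observe that each vertex of $C_\theta$ travels along a fixed line as $\theta$ varies and lands in $C_{\bar\theta}$ at $\theta=\bar\theta$, and (iii) lift an arbitrary convex combination from $C_\theta$ to $C_{\bar\theta}$ using the same barycentric coefficients. The only cosmetic difference is bookkeeping: you parametrize vertices by active index sets $J$ and extract the Lipschitz constant as $\eta=\max_k\|d_{J_k}\|_1$ via Cramer's rule, whereas the paper takes $\theta_0$ to be the second-largest value of $\mathbbm{1}^Tx$ over the extreme points of $C$ and reads off $\eta=(\min_j\cos\sigma_j)^{-1}$ from the angle each vertex trajectory makes with $\mathbbm{1}$; since $\mathbbm{1}^Td_{J_k}=1$ these constants are equivalent up to norm-equivalence factors.
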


\begin{proof}
By definition, $C_{\theta}$ is a bounded convex polyhedron, which can be reformulated as the convex hull of its extreme points:
\[
C_{\theta} = \text{conv}\left(x_{\theta}^{(1)}, x_{\theta}^{(2)}, \cdots, x_{\theta}^{(s(\theta))}\right)
\]
in which $x_{\theta}^{(1)}, x_{\theta}^{(2)}, \cdots, x_{\theta}^{(s(\theta))}$ are all extreme points of $C_{\theta}$. We denote by $J_{\theta}: = \{j: a_j^T x_{\theta}^{(k)} = b_j, \text{ for some } 1\le k \le s(\theta) \}$ the indices of constraints which are saturated in at least one extreme point of $C_{\theta}$. In other words, $J_{\theta}$ are the indices of constraints which interacts with hyperplane $\mathbbm{1}^Tx = \theta$ on $C$. Let $E_C = \{x_1,\cdots, x_m\}$ be the set of extreme points of $C$ such that $C = \text{conv}(E_C)$, we denote by $E_j: = \{x\in E_C: a^T_j x = b_j\}$ the extreme points of $C$ saturating the $j$-th constraint, and $E_{J_{\theta}} = \cup_{j\in J_{\theta}} E_j$.

Evaluating the linear function $\mathbbm{1}^Tx$ over $E_C$, we know that the maximal value over $\{ \mathbbm{1}^Tx_1, \cdots, \mathbbm{1}^Tx_m \}$ equals $\bar{\theta}$. Let the second maximal value over $\{ \mathbbm{1}^Tx_1, \cdots, \mathbbm{1}^Tx_m \}$ be $\theta_0$. Note that in an extreme case where the second maximal value $\theta_0$ cannot be attained, the maximal value $\bar{\theta}$ is reached in the entire $C$, and the conclusion of the lemma is trivially held. So we only consider the nontrival case where the second maximal value $\theta_0$ is attained. It is evident that $E_{J_{\theta}}$ is invariant for any $\theta\in(\theta_0,\bar{\theta}]$.

For any $\theta\in(\theta_0,\bar{\theta})$, we consider the extreme points $\{x_{\theta}^{(j)}\}_{j=1}^{s(\theta)}$ of $C_{\theta}$ in which $s(\theta) \equiv s$. As $\theta\rightarrow\bar{\theta}$, we have $x_{\theta}^{(j)} \rightarrow \bar{x}_{\bar{\theta}}^{(j)}$ for $j=1,\cdots, s$. Here $\{\bar{x}_{\bar{\theta}}^{(j)}\}_{j=1}^{s}$ are the extreme points of $C_{\bar{\theta}}$, some of which might be repeatedly counted. Now for any point $x\in C_{\theta}$ such that
\[
x = \lambda_1x_{\theta}^{(1)} + \cdots + \lambda_s x_{\theta}^{(s)},\quad \{\lambda_j\}_{j=1}^s \in \Sigma_+^s
\]
we can take $\bar{x}$ as
\[
\bar{x} = \lambda_1\bar{x}_{\bar{\theta}}^{(1)} + \cdots + \lambda_s \bar{x}_{\bar{\theta}}^{(s)},
\]
then
\begin{align}\label{eqn:temp01}
\|x-\bar{x}\|_{1}\le \max_{1\le j \le s} \| x_{\theta}^{(j)} - \bar{x}_{\bar{\theta}}^{(j)} \|_{1} \le \|\mathbbm{1}\|_2\cdot \max_{1\le j \le s} \| x_{\theta}^{(j)} - \bar{x}_{\bar{\theta}}^{(j)} \|_{2}.
\end{align}

Now we consider the angle $\sigma_j$ made by $\bar{x}_{\bar{\theta}}^{(j)} - x_{\theta}^{(j)} $ and $\mathbbm{1}$,
\begin{align}\label{eqn:temp02}
\sigma_j = \arccos \frac{\left\langle \bar{x}_{\bar{\theta}}^{(j)} - x_{\theta}^{(j)}, \mathbbm{1}\right\rangle}{\|\bar{x}_{\bar{\theta}}^{(j)} - x_{\theta}^{(j)}\|_2\|\mathbbm{1}\|_2}.
\end{align}
It turns out that $\sigma_j$ cannot be $\pi/2$. Otherwise, 
\[
0 = \left\langle \bar{x}_{\bar{\theta}}^{(j)} - x_{\theta}^{(j)}, \mathbbm{1}\right\rangle = \bar{\theta} - \theta,
\]
implies that $\theta = \bar{\theta}$, which is the trivial case that $\bar{\theta}$ is attained everywhere in $C$. Hence $\{\cos \sigma_j\}_{j=1}^s$ must be bounded away from 0. Then we have
\[
\cos\sigma_j  \|\bar{x}_{\bar{\theta}}^{(j)} - x_{\theta}^{(j)}\|_2 \|\mathbbm{1}\|_2 = \left\langle \bar{x}_{\bar{\theta}}^{(j)} - x_{\theta}^{(j)}, \mathbbm{1}\right\rangle = \bar{\theta} - \theta,
\]
which leads to
\[
\|\bar{x}_{\bar{\theta}}^{(j)} - x_{\theta}^{(j)}\|_2 = \frac{1}{\cos\sigma_j\|\mathbbm{1}\|_2} (\bar{\theta}-\theta).
\]
Combining with equation (\ref{eqn:temp01}), we have
\[
\|x-\bar{x}\|_{1} \le \frac{1}{\min_{1\le j \le s}\cos\sigma_j }|\bar{\theta}-\theta|.
\]
The conclusion holds by taking $\eta = (\min_{1\le j \le s}\cos\sigma_j )^{-1}$. 

Lastly, we show that $\cos\sigma_j$ is independent of $\theta$, or more specifically of $x_{\theta}^{(j)}$. To this end, we choose an arbitrary $\tilde{\theta}\in (\theta,\bar{\theta})$. Note that $x_{\theta}^{(j)}$ and $\bar{x}_{\bar{\theta}}^{(j)}$ saturate the same set of constraints since when $\theta\rightarrow\bar{\theta}$, the hyperplane $\mathbbm{1}^Tx=\theta$ does not go through any extreme point of $C$ ($E_{J_{\theta}}$ is invariant for any $\theta\in(\theta_0,\bar{\theta}]$), therefore the point 
\begin{align}\label{eqn:temp03}
\tilde{x}_{\tilde{\theta}}^{(j)} : = \frac{\tilde{\theta}-\theta}{\bar{\theta}-\theta} \bar{x}_{\bar{\theta}}^{(j)} + \frac{\bar{\theta}-\tilde{\theta}}{\bar{\theta}-\theta} x_{\theta}^{(j)}
\end{align}
which saturate the same set of constraints as $x_{\theta}^{(j)}$ and $\bar{x}_{\bar{\theta}}^{(j)}$, is exactly the extreme point of $C_{\tilde{\theta}}$ that falls on the line segment between $x_{\theta}^{(j)}$ and $\bar{x}_{\bar{\theta}}^{(j)}$. Finally the linear relation (\ref{eqn:temp03}) together with the definition of $\sigma_j$ indicates that $\cos\sigma_j$ is independent of $\theta$. The proof is completed.

\end{proof}

\begin{lemma}\label{lemma:equivalence}
Given a cost matrix $\mathbf{C}$ with $\infty$-pattern $\mathcal{P}_{\infty}(\mathbf{C})$, The two sOT formulations (\ref{eqn:UUOT_formI}) and (\ref{eqn:UUOT_formII}) are equivalent for sufficiently large $\gamma$.
\end{lemma}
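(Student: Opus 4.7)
The plan is to parametrize both problems by the transported mass $\theta := \|\textbf{P}\|_1$. The marginal identities $\textbf{P}\mathbbm{1} = \textbf{a}-\boldsymbol\upmu$ and $\textbf{P}^T\mathbbm{1} = \textbf{b}-\boldsymbol\upnu$ force $\|\boldsymbol\upmu\|_1 + \|\boldsymbol\upnu\|_1 = \|\textbf{a}\|_1 + \|\textbf{b}\|_1 - 2\theta$, so formulation (\ref{eqn:UUOT_formI}) is equivalent to first maximizing $\theta$ over the bounded convex polytope
\begin{equation*}
\mathcal{F} := \big\{\textbf{P} \in \mathbb{R}_+^{n\times m} : \textbf{P}\mathbbm{1} \leq \textbf{a},\ \textbf{P}^T\mathbbm{1} \leq \textbf{b},\ P_{ij}=0\ \forall (i,j)\in\mathcal{P}_\infty(\textbf{C})\big\},
\end{equation*}
obtaining some maximum $\bar\theta$, and then minimizing $\langle\textbf{P},\textbf{C}\rangle$ over the top slice $\mathcal{F}_{\bar\theta} := \{\textbf{P}\in\mathcal{F} : \|\textbf{P}\|_1 = \bar\theta\}$. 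Formulation (\ref{eqn:UUOT_formII}) in turn reduces, modulo the additive constant $\gamma(\|\textbf{a}\|_1+\|\textbf{b}\|_1)$, to the single problem $\min_{\textbf{P}\in\mathcal{F}}\{\langle\textbf{P},\textbf{C}\rangle - 2\gamma\|\textbf{P}\|_1\}$. Denote its optimizer by $\textbf{P}_\gamma^*$ with mass $\theta_\gamma$, and let $\bar{\textbf{P}}^*$ be an optimizer of (\ref{eqn:UUOT_formI}). It then suffices to show $\theta_\gamma = \bar\theta$ for sufficiently large $\gamma$, since both formulations subsequently minimize $\langle\textbf{P},\textbf{C}\rangle$ over the same slice $\mathcal{F}_{\bar\theta}$.

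To establish $\theta_\gamma = \bar\theta$ I would split into two regimes using the critical value $\theta_0 < \bar\theta$ produced by Lemma \ref{lemma:polytope} applied to $\mathcal{F}$ with the linear functional $\|\cdot\|_1$. Set $K := \max\{|C_{ij}| : C_{ij} < \infty\}$, finite by construction. If $\theta_\gamma \leq \theta_0$, optimality of $\textbf{P}_\gamma^*$ against the admissible test point $\bar{\textbf{P}}^* \in \mathcal{F}$ yields
\begin{equation*}
2\gamma(\bar\theta - \theta_0) \leq 2\gamma(\bar\theta - \theta_\gamma) \leq \langle\bar{\textbf{P}}^*, \textbf{C}\rangle - \langle\textbf{P}_\gamma^*, \textbf{C}\rangle \leq \langle\bar{\textbf{P}}^*, \textbf{C}\rangle + K\bar\theta,
\end{equation*}
since $\langle\textbf{P}_\gamma^*, \textbf{C}\rangle \geq -K\|\textbf{P}_\gamma^*\|_1 \geq -K\bar\theta$; this fails for any $\gamma > (\langle\bar{\textbf{P}}^*,\textbf{C}\rangle + K\bar\theta)/[2(\bar\theta-\theta_0)]$, eliminating the regime.

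If instead $\theta_\gamma \in (\theta_0, \bar\theta]$, I would invoke Lemma \ref{lemma:polytope} on $\textbf{P}_\gamma^* \in \mathcal{F}_{\theta_\gamma}$ to produce $\tilde{\textbf{P}} \in \mathcal{F}_{\bar\theta}$ with $\|\textbf{P}_\gamma^* - \tilde{\textbf{P}}\|_1 \leq \eta(\bar\theta - \theta_\gamma)$. Since both plans vanish on $\mathcal{P}_\infty(\textbf{C})$, $|\langle\tilde{\textbf{P}} - \textbf{P}_\gamma^*, \textbf{C}\rangle| \leq K\eta(\bar\theta - \theta_\gamma)$, and optimality of $\textbf{P}_\gamma^*$ against $\tilde{\textbf{P}}$ then gives
\begin{equation*}
2\gamma(\bar\theta - \theta_\gamma) \leq \langle\tilde{\textbf{P}} - \textbf{P}_\gamma^*, \textbf{C}\rangle \leq K\eta(\bar\theta - \theta_\gamma),
\end{equation*}
which forces $\theta_\gamma = \bar\theta$ whenever $\gamma > K\eta/2$. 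Picking $\gamma$ larger than both thresholds locks $\textbf{P}_\gamma^* \in \mathcal{F}_{\bar\theta}$, and the two formulations coincide.

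The hard part is the second regime, for which Lemma \ref{lemma:polytope} is indispensable: its Lipschitz-type bound keeps the cost savings from under-transporting linear in the mass deficit, so the linear penalty $2\gamma(\bar\theta - \theta_\gamma)$ strictly dominates for large $\gamma$. The degenerate situations flagged in Lemma \ref{lemma:polytope}, where $\bar\theta$ is attained throughout $\mathcal{F}$, render the equivalence trivially true and pose no difficulty.
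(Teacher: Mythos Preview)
Your proposal is correct and follows essentially the same approach as the paper: both arguments parametrize by the transported mass $\theta$, invoke Lemma~\ref{lemma:polytope} to obtain the Lipschitz bound $\|\textbf{P}_{\theta}-\textbf{P}_{\bar\theta}\|_1\le\eta(\bar\theta-\theta)$ in the near regime $\theta\in(\theta_0,\bar\theta]$, and use a crude estimate in the far regime $\theta\le\theta_0$. The only organizational difference is direction: the paper fixes the optimizer of (\ref{eqn:UUOT_formI}) and shows it beats every competitor in (\ref{eqn:UUOT_formII}), whereas you fix the optimizer $\textbf{P}_\gamma^*$ of (\ref{eqn:UUOT_formII}) and show its mass must equal $\bar\theta$; the two are logically equivalent and rely on the same inequalities.
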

\begin{proof}

Step I. Let $(\boldsymbol\upmu_{\text{opt}}, \boldsymbol\upnu_{\text{opt}})$ be an optimal pair of blocked measures for  (\ref{eqn:UUOT_formI}), namely, the system can at most transfer the amount of mass $\bar{\theta} := \|\textbf{a}-\boldsymbol\upmu_{\text{opt}}\|_1 = \|\textbf{b}-\boldsymbol\upnu_{\text{opt}}\|_1$. We denote by $\textbf{P}_{\bar{\theta}}^*$ a corresponding optimal transport plan. Take any nonnegative and feasible $ \theta < \bar{\theta}$, if we can show that for a plan $\textbf{P}_{\theta}^*$ defined as
\begin{align}\label{eqn:mOpt}
\textbf{P}_{\theta}^* = \underset{ \textbf{P} } {\text{argmin}} \{ \langle \textbf{P}, \textbf{C} \rangle: \textbf{P} \in\mathbf{U}(\le \textbf{a}, \le \textbf{b}), \langle \textbf{P}, \mathbbm{1}\rangle = \theta \},
\end{align}
one has that
\begin{align}\label{eqn:gammaInequality}
\langle \textbf{P}^*_{\bar{\theta}} - \textbf{P}^*_{\theta}, \textbf{C} \rangle \le 2\gamma \langle \textbf{P}^*_{\bar{\theta}} - \textbf{P}^*_{\theta}, \mathbbm{1} \rangle,
\end{align}
for some $\gamma>0$, then for any $(\boldsymbol\upmu,\boldsymbol\upnu)$ such that $\theta = \|\textbf{a}-\boldsymbol\upmu\|_1 = \|\textbf{b}-\boldsymbol\upnu\|_1$, it results in
\begin{align*}
&\|\boldsymbol\upmu\|_1 + \|\boldsymbol\upnu\|_1 - ( \|\boldsymbol\upmu_{\text{opt}}\|_1 + \|\boldsymbol\upnu_{\text{opt}}\|_1) \\
=&\  \langle \boldsymbol\upmu - \boldsymbol\upmu_{\text{opt}}, \mathbbm{1} \rangle + \langle \boldsymbol\upnu - \boldsymbol\upnu_{\text{opt}}, \mathbbm{1} \rangle \\
=&\  2 \langle \textbf{P}_{\bar{\theta}}^* - \textbf{P}_{\theta}^*, \mathbbm{1} \rangle \\
\ge &\  \gamma^{-1} \left\langle \textbf{P}_{\bar{\theta}}^* - \textbf{P}_{\theta}^*, \mathbf{C} \right\rangle,
\end{align*}
leading to
\[
\langle \textbf{P}_{\bar{\theta}}^*, \textbf{C} \rangle + \gamma (\|\boldsymbol\upmu_{\text{opt}}\|_1 + \|\boldsymbol\upnu_{\text{opt}}\|_1) \le 
\langle \textbf{P}_{\theta}^*, \textbf{C} \rangle + \gamma (\|\boldsymbol\upmu\|_1 + \|\boldsymbol\upnu\|_1),
\]
which implies the optimality of $(\boldsymbol\upmu_{\text{opt}}, \boldsymbol\upnu_{\text{opt}},\mathbf{P}_{\bar{\theta}}^*)$ for (\ref{eqn:UUOT_formII}),
and consequently the equivalence holds.

Step II. We now prove that there exists a constant $\gamma>0$ such that (\ref{eqn:gammaInequality}) holds. Using lemma \ref{lemma:polytope}, we know that there exists a critical $\theta_0$, such that for any given $\theta\in(\theta_0,\bar{\theta}]$ and $\mathbf{P}_{\theta}^*$ defined in (\ref{eqn:mOpt}), we can find a feasible $\mathbf{P}_{\bar{\theta}}$ satisfying $\langle \mathbf{P}_{\bar{\theta}}, \mathbbm{1} \rangle = \bar{\theta}$, such that
\begin{align}
\| \mathbf{P}_{\bar{\theta}} - \mathbf{P}_{\theta}^* \|_1 \le \eta|\bar{\theta}-\theta|.
\end{align}
Then 
\begin{align}
\langle \textbf{P}^*_{\bar{\theta}} - \textbf{P}^*_{\theta}, \textbf{C} \rangle & \le
\langle \textbf{P}_{\bar{\theta}} - \textbf{P}^*_{\theta}, \textbf{C} \rangle \nonumber\\
& \le \| \mathbf{P}_{\bar{\theta}} - \mathbf{P}_{\theta}^* \|_1 \cdot \|\mathbf{C}\|_{\infty} \nonumber \\
& \le \|\mathbf{C}\|_{\infty}\eta|\bar{\theta}-\theta| \nonumber \\
& = \|\mathbf{C}\|_{\infty}\eta \langle \textbf{P}^*_{\bar{\theta}} - \textbf{P}^*_{\theta}, \mathbbm{1} \rangle. \label{eqn:temp06}
\end{align}
On the other hand, for any feasible $\theta\le\theta_0$ and $\mathbf{P}_{\theta}^*$ defined in (\ref{eqn:mOpt}), we simply have
\begin{align}
\langle \textbf{P}^*_{\bar{\theta}} - \textbf{P}^*_{\theta}, \textbf{C} \rangle & = \langle \textbf{P}^*_{\bar{\theta}}, \textbf{C} \rangle -
\langle \textbf{P}^*_{\theta}, \textbf{C} \rangle  \nonumber\\
& \le \|\mathbf{C}\|_{\infty}\cdot \bar{\theta} \nonumber\\
&=\|\mathbf{C}\|_{\infty}\cdot \frac{\bar{\theta}}{\bar{\theta}-\theta}(\bar{\theta}-\theta) \nonumber\\
&\le \|\mathbf{C}\|_{\infty}\cdot \frac{\bar{\theta}}{\bar{\theta}-\theta_0}(\bar{\theta}-\theta) \nonumber \\
&= \|\mathbf{C}\|_{\infty}\cdot \frac{\bar{\theta}}{\bar{\theta}-\theta_0}\langle \textbf{P}^*_{\bar{\theta}} - \textbf{P}^*_{\theta}, \mathbbm{1} \rangle. \label{eqn:temp07}
\end{align}
Finally, combining the inequalities (\ref{eqn:temp06}) and (\ref{eqn:temp07}) and taking $2\gamma = \max\{\eta, \bar{\theta}/(\bar{\theta}-\theta_0)\}\cdot \|\mathbf{C}\|_{\infty}$, we prove the inequality (\ref{eqn:gammaInequality}), and therefore the optimality of $(\boldsymbol\upmu_{\text{opt}}, \boldsymbol\upnu_{\text{opt}},\mathbf{P}_{\bar{\theta}}^*)$ for (\ref{eqn:UUOT_formII}).

\end{proof}

Note that $\mathcal{A}_{\textbf{C}}$ is always non-empty, we can therefore rewrite the formulation (\ref{eqn:UUOT_formII}) in a simpler form:
\begin{align}\label{eqn:UUOT_formIII}
L_{\text{sOT}}(\textbf{a},\textbf{b};\mathbf{C}) = \min_{\textbf{P}\in \mathbf{U}(\le \textbf{a},\le \textbf{b})} \langle \textbf{P}, \textbf{C} \rangle + \gamma(\|\textbf{a}-\textbf{P}\mathbbm{1}\|_1 + \|\textbf{b}-\textbf{P}^T\mathbbm{1}\|_1).
\end{align}

\section{Entropic regularization of sOT}

The idea to regularize the standard OT problem by an entropic term can be traced back to the early work by Schrodinger \cite{Schrodinger_PhysMath1931}. This entropic regularization has been well motivated in economics for predicting flows of commodities or actors in a market, in which the smoothness of such flows can be guaranteed \cite{Wilson_JTEP1969}. A recent work \cite{Cuturi_2013} provides a new motivation from the computational perspective that entropic regularization defines a strongly convex programming. Unlike the standard OT problem (\ref{eqn:standardOT}) which has multiple solutions, the entropic regularized OT problem has a unique solution, which corresponds to the optimizer of (\ref{eqn:standardOT}) with maximal entropy in the limit as the regularization parameter $\epsilon$ varnishes. More importantly, the unique solution to the entropic regularized OT problem is simply a diagonal scaling of the matrix $e^{-\textbf{C}/\epsilon}$. This diagonal scaling process can be efficiently implemented by the Sinkhorn algorithm \cite{Sinkhorn_AMS1964,Sinkhorn_PJM1967, SInkhorn_AMM1967}, which has linear rate of convergence \cite{Franklin_LAA1989}.

We now consider the entropic regularization of sOT (\ref{eqn:UUOT_formII}):
\begin{align}\label{eqn:EntropicUUOT_formI}
\min_{\substack{(\boldsymbol\upmu,\boldsymbol\upnu)\in\mathcal{A}_{\textbf{C}}\\ \textbf{P}\in \mathbf{U}(\textbf{a}-\boldsymbol\upmu,\textbf{b}-\boldsymbol\upnu)}} \langle \textbf{P}, \textbf{C} \rangle - \epsilon  H(\textbf{P}) + \gamma(\|\boldsymbol\upmu\|_1 + \|\boldsymbol\upnu\|_1) 
\end{align}
or equivalently 
\begin{align}\label{eqn:EntropicUUOT_formII}
\min_{\textbf{P} \in \mathbf{U}(\le \textbf{a}, \le \textbf{b})} \langle \textbf{P}, \textbf{C} \rangle - \epsilon H(\textbf{P}) + \gamma(\|\textbf{a} - \textbf{P}\mathbbm{1} \|_1 + \|\textbf{b} - \textbf{P}^T\mathbbm{1} \|_1) .
\end{align}
It is well known that the unique solution $\textbf{P}_{\epsilon}^*$ of (\ref{eqn:EntropicUUOT_formII}) converges to the optimal solution with maximal entropy within the set of all optimal solutions of the problem (\ref{eqn:UUOT_formIII}) \cite{Cominetti_1994}. 

Taking $\textbf{K} = \exp(-\textbf{C}/\epsilon)$ as the Gibbs kernel, sOT problem (\ref{eqn:EntropicUUOT_formII}) can be rewritten  in terms of the KL divergence as:
\begin{align}\label{eqn:KLUUOT_formI}
\min_{\textbf{P} \in \mathbf{U}(\le \textbf{a}, \le \textbf{b})}  \epsilon \text{KL}(\textbf{P}|\textbf{K}) + \gamma(\|\textbf{a} - \textbf{P}\mathbbm{1} \|_1 + \|\textbf{b} - \textbf{P}^T\mathbbm{1} \|_1),
\end{align}
or equivalently
\begin{align}\label{eqn:KLUUOT_formII}
\min_{\textbf{P} \in \mathbb{R}_{+}^{n\times m}}  \epsilon \text{KL}(\textbf{P}|\textbf{K}) + \gamma\|\textbf{a} - \textbf{P}\mathbbm{1} \|_1 + \iota_{[0,\textbf{a}]}(\textbf{P}\mathbbm{1}) + \gamma \|\textbf{b} - \textbf{P}^T\mathbbm{1} \|_1 + \iota_{[0,\textbf{b}]}(\textbf{P}^T\mathbbm{1}).
\end{align}

\subsection{Dykstra Algorithm}

The entropic regularized sOT (\ref{eqn:KLUUOT_formII}) fits into a more general form
\begin{align}\label{eqn:BregmanProblem}
\min_{\textbf{P} \in \mathbb{R}_{+}^{n\times m}}   B_g(\textbf{P}|\textbf{K}) + \hat{h}_1(\textbf{P}) + \hat{h}_2(\textbf{P}).
\end{align}
Here $g$ is a given proper closed and strictly convex and differentiable function.  $B_g$ is the Bregman divergence (Bregman distance) defined as
\begin{align}\label{eqn:BregmanDis}
B_{g}(\textbf{P}|\textbf{Q}) = g(\textbf{P}) - g(\textbf{Q}) - \langle \nabla g(\textbf{Q}), \textbf{P} - \textbf{Q} \rangle.
\end{align}
Besides, $\hat{h}_1$ and $\hat{h}_2$ are two proper and lower semicontinuous convex functions.

Note that the Legendre transform of $g$
\[
g^*(y) = \max_{x} \langle x, y \rangle - g(x)
\]
is also smooth and strictly convex. In particular one has that $\nabla g$ and $\nabla g^*$ are bijective function such that $\nabla g^* = (\nabla g)^{-1}$.

Define the Bregman proximal operator of a convex function $\phi$ as
\begin{align}
\text{prox}_{\phi}^{B_g}(\textbf{Q}) = \underset{\textbf{P}}{\mathrm{argmin}}\ B_g(\textbf{P}|\textbf{Q}) + \phi(\textbf{P}).
\end{align} 
We assume that $\phi$ is coercive so that $\text{prox}_{\phi}^{B_g}(\textbf{Q})$ is uniquely defined by strict convexity.

The Dykstra algorithm for problem (\ref{eqn:BregmanProblem})  \cite{Peyre_SIIS2015} reads as follows

\begin{tcolorbox}

Dykstra algorithm for (\ref{eqn:BregmanProblem})   \\

Input: $\textbf{P}^0 = \textbf{K}$ and $\lambda^{-1} = \lambda^0 = 0$; \\
General step: for any $k=0,1,2,\cdots$ execute the following steps:
\begin{align}
\textbf{P}^{2k+1} & = \text{prox}_{\hat{h}_1}^{B_g}\Big( \nabla g^*\Big[ \nabla g(\textbf{P}^{2k}) + \lambda^{2k-1} \Big]  \Big); \\
 \lambda^{2k+1} & = \lambda^{2k-1} + \nabla g(\textbf{P}^{2k}) - \nabla g(\textbf{P}^{2k+1}); \\
\textbf{P}^{2k+2} & = \text{prox}_{\hat{h}_2}^{B_g}\Big( \nabla g^*\Big[ \nabla g(\textbf{P}^{2k+1}) + \lambda^{2k} \Big]  \Big); \\
\lambda^{2k+2} & = \lambda^{2k} + \nabla g(\textbf{P}^{2k+1}) - \nabla g(\textbf{P}^{2k+2}).
\end{align}

\end{tcolorbox}

It is shown in \cite{Peyre_SIIS2015} that the sequence $\{\textbf{P}^n\}_{n\ge0}$ generated by the above Dykstra algorithm converges to the solution of the problem  (\ref{eqn:BregmanProblem}).

When taking $g(\cdot)$ as the entropy function, the corresponding Bregman divergence $B_{g}(\textbf{P}|\textbf{K}) = \text{KL}(\textbf{P}|\textbf{K})$ becomes the KL divergence. In this case, if $\hat{h}_1$ and $\hat{h}_2$ in (\ref{eqn:BregmanProblem}) are of the special form as
\[
\hat{h}_1(\textbf{P}) = h_1(\textbf{P}\mathbbm{1}),\ \hat{h}_2(\textbf{P}) = h_2(\textbf{P}^T\mathbbm{1}),
\]
then the problem (\ref{eqn:BregmanProblem}) reduces to
\begin{align}\label{eqn:KLdiv2}
\min_{\textbf{P}} \text{KL}(\textbf{P}|\textbf{K}) + h_1(\textbf{P}\mathbbm{1}) + h_2(\textbf{P}^T\mathbbm{1}).
\end{align}
which is consistent with the sOT formulation  (\ref{eqn:KLUUOT_formII}), after dividing $\epsilon$ over all terms.

In this case, the optimal solution $\textbf{P}$ has the following decomposition
\begin{align}\label{eqn:DiagonalDecomp}
\textbf{P} = \text{diag}(\textbf{u}) \textbf{K} \text{diag}(\textbf{v}),
\end{align}
which is a diagonal scaling of the initial Gibbs kernel $\textbf{K}$, the same as the optimal solution for the regularized OT problem (which corresponds to $h_1(x) = \iota_{\{x=\mathbf{a}\}}(x)$ and $h_2(x) = \iota_{\{x=\mathbf{b}\}}(x)$ in (\ref{eqn:KLdiv2})). Indeed, this decomposition (\ref{eqn:DiagonalDecomp}) is not only holds for the optimal $\textbf{P}$, but it also holds for each iterate $\textbf{P}^n$ generated by Dykstra's algorithm for KL divergence. Therefore we assume that $\textbf{P}^n = \text{diag}(\textbf{u}^n) \textbf{K} \text{diag}(\textbf{v}^n)$. Then the Dykstra's algorithm can be written in an implementable form given as follows \cite{Chizat_2018}:

\begin{tcolorbox}
Dykstra algorithm for KL divergence (implementable form)   \\

Input:  $\textbf{u}^0 = \textbf{v}^0 = \mathbbm{1}$; \\
General step: for any $k=0,1,2,\cdots$ execute the following steps:
\begin{align}
\textbf{u}^{2k+1} &= \frac{\text{prox}_{h_1}^{\text{KL}}\big( \textbf{K} \textbf{v}^{2k} \big)}{\textbf{K} \textbf{v}^{2k}}, \ \textbf{v}^{2k+1} = \textbf{v}^{2k}; \\
\textbf{v}^{2k+2} &= \frac{\text{prox}_{h_2}^{\text{KL}}\big( \textbf{K}^T \textbf{u}^{2k+1} \big)}{\textbf{K}^T \textbf{u}^{2k+1}}, \ \textbf{u}^{2k+2} = \textbf{u}^{2k+1}.
\end{align}
\end{tcolorbox}
Note that in some literatures, this implementable form of the Dykstra's algorithm for KL divergence is called {\it{generalized Sinkhorn iteration}} for problem (\ref{eqn:KLdiv2}).

\subsection{Dykstra Algorithm for entropy regularized sOT problem}

The entropy regularized sOT problem (\ref{eqn:KLUUOT_formII}) is a special case of the KL divergence problem (\ref{eqn:KLdiv2}) by taking 
\begin{align}\label{eqn:prox_OptionI}
\quad \frac{1}{\epsilon} h_1(\textbf{P}\mathbbm{1}) = \gamma \|\textbf{a}-\textbf{P}\mathbbm{1}\|_1 + \iota_{[0,\textbf{a}]}(\textbf{P}\mathbbm{1}), \quad \frac{1}{\epsilon}h_2(\textbf{P}^T\mathbbm{1}) =  \gamma \|\textbf{b}-\textbf{P}^T\mathbbm{1}\|_1 + \iota_{[0,\textbf{b}]}(\textbf{P}^T\mathbbm{1}).
\end{align}
Indeed, $h_1/\epsilon$ (and $h_2/\epsilon$, respectively) can be viewed as a regularization term to render $\boldsymbol\upmu = \textbf{a} - \textbf{P}\mathbbm{1}$ ($\boldsymbol\upnu = \textbf{b} - \textbf{P}^T\mathbbm{1}$, respectively) as small as possible but within the range $[0,\textbf{a}]$ (and $[0,\textbf{b}]$, respectively). In this case, the implementation of Dykstra algorithm depends on the form of the proximal operator of $\|\cdot\|_1$ with respect to the KL divergence, which is given in the following lemma.
\begin{lemma}\label{lemma:prox_OptionI}
Let $h_i(\cdot) = \gamma \|\textbf{a}_i-\cdot\|_1 + \iota_{[0,\textbf{a}_i]}(\cdot), i = 1, 2$ with $\textbf{a}_1 = \textbf{a}, \textbf{a}_2 = \textbf{b}$, then the proximal operator of $h_i$ with respect the \emph{KL} divergence is given as
\begin{align}
\emph{prox}_{h_i/\epsilon}^{\emph{KL}} (\textbf{q}) = \min\{e^{\gamma/\epsilon}\textbf{q}, \textbf{a}_i\}, \ i = 1, 2.
\end{align}
\end{lemma}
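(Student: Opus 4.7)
The plan is to exploit separability and reduce the problem to a one-dimensional convex optimization in each coordinate. By the definition of the Bregman proximal operator together with the KL divergence and the separable structure of both $h_i$ and $\mathrm{KL}(\cdot|\mathbf{q})$, the optimization problem
\[
\mathrm{prox}_{h_i/\epsilon}^{\mathrm{KL}}(\mathbf{q}) = \arg\min_{\mathbf{p} \ge 0}\ \mathrm{KL}(\mathbf{p}|\mathbf{q}) + \tfrac{1}{\epsilon}\bigl(\gamma\|\mathbf{a}_i - \mathbf{p}\|_1 + \iota_{[0,\mathbf{a}_i]}(\mathbf{p})\bigr)
\]
decouples into independent scalar problems, one for each index $j$. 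Write $a = (a_i)_j$ and $q = q_j \ge 0$. On the feasible interval $p \in [0,a]$ we have $|a - p| = a - p$, so the absolute value is smooth there and no subdifferential of $|\cdot|$ needs to be computed.

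Next, I would analyze the scalar problem
\[
\min_{0 \le p \le a}\ p\log(p/q) - p + q + \tfrac{\gamma}{\epsilon}(a - p).
\]
The objective is strictly convex and differentiable on $(0,a]$, with derivative $\log(p/q) - \gamma/\epsilon$. Setting this to zero gives the unconstrained stationary point $p^{\mathrm{un}} = e^{\gamma/\epsilon}\,q$, which is nonnegative. Hence the constrained minimizer is
\[
p^* = \min\bigl\{e^{\gamma/\epsilon}\,q,\ a\bigr\},
\]
since the lower bound $p \ge 0$ is automatically satisfied (and if $q = 0$, the $p\log p$ term forces $p^* = 0$, in agreement with the formula). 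I would briefly verify this with KKT: at $p^* = a$ the Lagrange multiplier for the upper bound is $\log(a/q) - \gamma/\epsilon \ge 0$, which is precisely the condition $e^{\gamma/\epsilon}q \ge a$.

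Finally I would assemble the coordinate-wise conclusions into the vector formula $\mathrm{prox}_{h_i/\epsilon}^{\mathrm{KL}}(\mathbf{q}) = \min\{e^{\gamma/\epsilon}\mathbf{q}, \mathbf{a}_i\}$, where the minimum is understood entrywise. There is no real obstacle here; the only subtlety worth flagging is the $q_j = 0$ corner case and the convention $0\log 0 = 0$, both of which are consistent with the stated formula. The argument is essentially a routine KKT/first-order calculation once separability is invoked.
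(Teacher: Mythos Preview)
Your approach is essentially the same as the paper's: both reduce to a coordinatewise scalar problem on $[0,a]$, use that $|a-p|=a-p$ there, and apply first-order conditions to obtain the clipped formula $\min\{e^{\gamma/\epsilon}q,\,a\}$. One small slip to fix: in your KKT check the multiplier for the active upper bound is $\gamma/\epsilon-\log(a/q)$ (not $\log(a/q)-\gamma/\epsilon$), whose nonnegativity is indeed equivalent to $e^{\gamma/\epsilon}q\ge a$; the conclusion you state is correct, only the sign of the multiplier expression is reversed.
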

\begin{proof}
By definition of the proximal operator with respect to the KL divergence, we have
\begin{align*}
\text{prox}_{(\gamma/\epsilon)\|\textbf{a}_i-\cdot\| + \iota_{[0,\textbf{a}_i]}(\cdot)}^{\text{KL}}(\textbf{q}) &= 
\underset{\textbf{p}\in[0,\textbf{a}_i]}{\mathrm{argmin}}\ \text{KL}(\textbf{p}|\textbf{q}) + (\gamma/\epsilon)\|\textbf{a}_i-\textbf{p}\|
\end{align*}
If $\textbf{a}_i\le \textbf{q}$, both $\text{KL}(\cdot|\textbf{q})$ and $\|\textbf{a}_i-\cdot\|$ decrease over domain $[0,\textbf{a}_i]$, so the minimum is attained at $\textbf{p} = \textbf{a}_i$; if $\textbf{a}_i \ge \textbf{q}$, taking the derivative of $\text{KL}(\textbf{p}|\textbf{q}) + (\gamma/\epsilon)\|\textbf{a}_i-\textbf{p}\|$ with respect to $\textbf{p}$ and set it to zero, we find $\textbf{p} = \min\{e^{\gamma/\epsilon}\textbf{q}, \textbf{a}_i\}$. Combining both two cases yields the result.
\end{proof}

Inserting Lemma \ref{lemma:prox_OptionI} to the Dykstra algorithm for KL divergence,  we obtain the generalized Sinkhorn iteration for entropy regularized sOT problem with the regularization terms in (\ref{eqn:prox_OptionI}):
\begin{tcolorbox}

Generalized Sinkhorn algorithm for sOT  \\

Input: $\textbf{u}^0 = \textbf{v}^0 = \mathbbm{1}$; \\
General step: for any $k=0,1,2,\cdots$ execute the following steps:
\begin{align}
\textbf{u}^{2k+1} &=  \frac{\min\{ e^{\gamma/\epsilon}\textbf{K}\textbf{v}^{2k}, \textbf{a}\}}{\textbf{K} \textbf{v}^{2k}} = \min \left\{ e^{\frac{\gamma}{\epsilon}}\mathbbm{1}, \frac{\textbf{a}}{\textbf{K}\textbf{v}^{2k}} \right\}, \ \textbf{v}^{2k+1} = \textbf{v}^{2k}; \\
\textbf{v}^{2k+2} &=  \frac{\min\{ e^{\gamma/\epsilon}\textbf{K}^T\textbf{u}^{2k+1}, \textbf{b}\}}{\textbf{K}^T \textbf{u}^{2k+1}} = \min \left\{ e^{\frac{\gamma}{\epsilon}}\mathbbm{1}, \frac{\textbf{b}}{\textbf{K}^T\textbf{u}^{2k+1}} \right\}, \ \textbf{u}^{2k+2} = \textbf{u}^{2k+1}.
\end{align}

\end{tcolorbox}

\section{sOT barycenter}

Given a set $\{\textbf{b}_j\}_{j=1}^J$  of unbalanced marginal densities $\textbf{b}_j \in \mathbb{R}_{+}^{m}$ and a weight $\lambda = (\lambda_1,\cdots, \lambda_J) \in \text{int}(\Delta_J)$, it is of practical interest to compute the weighted sOT barycenter of  $\{\textbf{b}_j\}_{j=1}^J$. This problem can be viewed as the generalization of the standard Wasserstein barycenter problem studied in \cite{CuturiDoucet_2014}.

We define the sOT barycenter problem in a similar manner as that for sOT. Let $\boldsymbol\upnu = (\boldsymbol\upnu_j)_{j=1}^J\in(\mathbb{R}_+^m)^J$ denote the blocked marginal measure, and $\|\boldsymbol\upnu\|_1 = \sum_{j} \|\boldsymbol\upnu_j\|_1$. We define two sets, one of which is for the feasible blocked marginal density $\boldsymbol\upnu$, and the other of which is for the feasible $\boldsymbol\upnu$ with minimal 1-norm:
\begin{align*}
&\mathcal{F}= \{\boldsymbol\upnu: \|\mathbf{b}_1-\boldsymbol\upnu_1\|_1=\cdots=\|\mathbf{b}_J-\boldsymbol\upnu_J\|_1, \text{ and } \exists\ \mathbf{P}_j\in\mathbf{U}(\mathbf{a},\mathbf{b}_j-\boldsymbol\upnu_j), \text{ such that  } \langle \mathbf{P}_j, \mathbf{C} \rangle < \infty \text{ for some } \mathbf{a}\}, \\
&\mathcal{G}= \underset{\boldsymbol\upnu \in\mathcal{F}}{\mathrm{argmin}}  \|\boldsymbol\upnu\|_1.
\end{align*}
Then the sOT barycenter problem is defined as:
\begin{align}\label{eqn:UUOTBarycenter01}
\min_{\boldsymbol\upnu\in\mathcal{G}} \min_{\mathbf{P}_j\in\mathbf{U}(\mathbf{a},\mathbf{b}_j-\boldsymbol\upnu_j)} \sum_{j=1}^J \lambda_j \langle \mathbf{P}_j, \mathbf{C} \rangle.
\end{align}

Similarly as the equivalence between various sOT formulations, we can show that (\ref{eqn:UUOTBarycenter01}) is equivalent to 
\begin{align}\label{eqn:UUOTBarycenter02}
\min_{\substack{\boldsymbol\upnu \in\mathcal{F} \\ \mathbf{P}_j\in\mathbf{U}(\mathbf{a},\mathbf{b}_j-\boldsymbol\upnu_j)}} \sum_{j=1}^J \lambda_j \langle \mathbf{P}_j, \mathbf{C} \rangle + \frac{\gamma}{J} \|\boldsymbol\upnu\|_1
\end{align}
for sufficiently large $\gamma$. The equivalence between (\ref{eqn:UUOTBarycenter01}) and (\ref{eqn:UUOTBarycenter02}) is summarized in theorem \ref{lemma:UUOTbarycenter}.
\begin{theorem}\label{lemma:UUOTbarycenter}
Given a cost matrix $\mathbf{C}$ with $\infty$-pattern $\mathcal{P}_{\infty}(\mathbf{C})$, the two formulations for the sOT barycenter problems (\ref{eqn:UUOTBarycenter01}) and (\ref{eqn:UUOTBarycenter02}) are equivalent for sufficiently large $\gamma$.
\end{theorem}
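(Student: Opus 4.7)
The plan is to mirror the two-step structure used in the proof of Lemma \ref{lemma:equivalence}, with the single plan $\mathbf{P}$ replaced by the joint tuple $(\mathbf{a}, \mathbf{P}_1, \ldots, \mathbf{P}_J)$ that parametrizes a candidate barycenter configuration. First I would fix an optimizer $(\boldsymbol\upnu_{\text{opt}}, \mathbf{P}^*_{\bar\theta, 1}, \ldots, \mathbf{P}^*_{\bar\theta, J})$ of (\ref{eqn:UUOTBarycenter01}), where $\bar\theta = \|\mathbf{b}_j - \boldsymbol\upnu_{\text{opt}, j}\|_1$ denotes the common transported mass, and for each feasible $\theta < \bar\theta$ let $(\mathbf{P}^*_{\theta, 1}, \ldots, \mathbf{P}^*_{\theta, J})$ minimize $\sum_j \lambda_j \langle \mathbf{P}_j, \mathbf{C}\rangle$ subject to the feasibility constraints and common mass $\theta$. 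Granted an inequality
\begin{equation*}
\sum_{j=1}^J \lambda_j \bigl\langle \mathbf{P}^*_{\bar\theta, j} - \mathbf{P}^*_{\theta, j}, \mathbf{C}\bigr\rangle \le \gamma (\bar\theta - \theta)
\end{equation*}
for a suitable $\gamma$, together with the identity $\|\boldsymbol\upnu\|_1 - \|\boldsymbol\upnu_{\text{opt}}\|_1 = J(\bar\theta - \theta)$ obtained by summing $\|\mathbf{b}_j - \boldsymbol\upnu_j\|_1 = \theta$ over $j$, the factor $\gamma/J$ in (\ref{eqn:UUOTBarycenter02}) will absorb the $J$ exactly as in Step I of Lemma \ref{lemma:equivalence}, and the cost-versus-penalty comparison will yield optimality of the same triple for (\ref{eqn:UUOTBarycenter02}), establishing the equivalence.

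Step II would produce the key inequality by invoking Lemma \ref{lemma:polytope} on the joint polytope of tuples $(\mathbf{a}, \mathbf{P}_1, \ldots, \mathbf{P}_J)$ satisfying $\mathbf{P}_j \ge 0$, $\mathbf{P}_j\mathbbm{1} = \mathbf{a}$, $\mathbf{P}_j^T\mathbbm{1}\le \mathbf{b}_j$, and $\mathbf{P}_{j,i\ell} = 0$ for $(i,\ell)\in\mathcal{P}_\infty(\mathbf{C})$, using the linear functional $(\mathbf{a}, \mathbf{P}_1, \ldots, \mathbf{P}_J) \mapsto \mathbbm{1}^T\mathbf{a}$, which equals the common transported mass on this polytope. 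This will yield a critical threshold $\theta_0 < \bar\theta$ such that for $\theta\in(\theta_0, \bar\theta]$ an approximating tuple $(\tilde{\mathbf{a}}, \tilde{\mathbf{P}}_1, \ldots, \tilde{\mathbf{P}}_J)$ at mass $\bar\theta$ exists with $\sum_j \|\tilde{\mathbf{P}}_j - \mathbf{P}^*_{\theta, j}\|_1 \le \eta(\bar\theta - \theta)$; combining the optimality of $\mathbf{P}^*_{\bar\theta, j}$ with the uniform bound $\|\mathbf{C}\|_\infty$ on finite entries of $\mathbf{C}$ along the support yields the near-$\bar\theta$ estimate. For $\theta\le\theta_0$, the crude bound $\sum_j \lambda_j \langle \mathbf{P}^*_{\bar\theta, j}, \mathbf{C}\rangle \le \|\mathbf{C}\|_\infty\bar\theta \le \|\mathbf{C}\|_\infty \bigl(\bar\theta/(\bar\theta - \theta_0)\bigr)(\bar\theta - \theta)$ applies verbatim as in Lemma \ref{lemma:equivalence}, and taking $\gamma = \|\mathbf{C}\|_\infty \max\{\eta, \bar\theta/(\bar\theta - \theta_0)\}$ handles both regimes simultaneously.

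The main obstacle is extending Lemma \ref{lemma:polytope} from its stated form, with linear functional $\mathbbm{1}^T x$ on a polytope $\{Ax\le b\}$, to the joint polytope with the possibly-degenerate functional $\mathbbm{1}^T\mathbf{a}$ that ignores the plan coordinates. A careful reading of the proof shows that the only properties actually used of $\mathbbm{1}^T x$ are its linearity and its non-constancy along the edges of the polytope connecting extreme points of different level sets; equivalently, the angle $\sigma_j$ between each such edge direction and the functional's gradient must be bounded away from $\pi/2$. In the joint polytope this remains true because proportionally scaling $\mathbf{a}$ together with all $\mathbf{P}_j$ strictly changes $\mathbbm{1}^T\mathbf{a}$ along any feasible direction that moves between $C_\theta$ and $C_{\bar\theta}$. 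Once this generalization is recorded, the remainder of the argument is a transcription of the single-plan case with the cost taken as the $\lambda_j$-weighted sum over $j$.
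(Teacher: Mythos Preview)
Your proposal follows the paper's two-step strategy, and Step I matches essentially line for line. The one notable difference is in Step II: you parametrize the joint polytope by tuples $(\mathbf{a}, \mathbf{P}_1, \ldots, \mathbf{P}_J)$ and then flag as ``the main obstacle'' the need to extend Lemma \ref{lemma:polytope} to the functional $\mathbbm{1}^T\mathbf{a}$, which is not the all-ones functional on the full coordinate vector. The paper sidesteps this entirely by dropping $\mathbf{a}$ from the description: since $\mathbf{a} = \mathbf{P}_j\mathbbm{1}$ is already determined by the plans, one works on the polytope of tuples $(\mathbf{P}_1, \ldots, \mathbf{P}_J)$ subject to the equality constraints $\mathbf{P}_j\mathbbm{1} = \mathbf{P}_{j+1}\mathbbm{1}$ (together with $\mathbf{P}_j^T\mathbbm{1}\le\mathbf{b}_j$, $\mathbf{P}_j\ge 0$, and the $\infty$-pattern zeros), and slices it by $\sum_{j=1}^J \langle \mathbf{P}_j, \mathbbm{1}\rangle = J\theta$. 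This is literally $\mathbbm{1}^T x$ on the vectorized collection of plans, so Lemma \ref{lemma:polytope} applies verbatim and no generalization is needed. Your proposed extension would also go through---the angle argument in the proof of Lemma \ref{lemma:polytope} only uses that the functional is linear and takes different values on the two level sets, which forces $\cos\sigma_j\neq 0$ automatically---but the paper's reparametrization is the cleaner way to eliminate the issue.
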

\begin{proof}
The proof is similar to that of Lemma \ref{lemma:equivalence}. Starting from an optimal $\boldsymbol\upnu^{\text{opt}}= (\boldsymbol\upnu^{\text{opt}}_j)_j$ for the formulation (\ref{eqn:UUOTBarycenter01}) and a corresponding optimal plan $\mathbf{P}^{\bar{\theta}, *}=(\mathbf{P}^{\bar{\theta}, *}_j)_j$ in which $\bar{\theta} = \| \mathbf{b}_1 - \boldsymbol\upnu^{\text{opt}}_1\|_1 = \cdots = \| \mathbf{b}_J - \boldsymbol\upnu^{\text{opt}}_J\|_1$, and taking any nonnegative and feasible $\theta<\bar{\theta}$, if we can prove that for any plan $\mathbf{P}^{\theta, *}$ defined as
\begin{align}\label{eqn:barycenter_temp00}
\mathbf{P}^{\theta, *} =  \underset{ \textbf{P} } {\text{argmin}} \left\{ \sum_{j=1}^J \lambda_j \langle \textbf{P}_j, \textbf{C} \rangle: \textbf{P}_j \in\mathbf{U}(= \textbf{a}, \le \textbf{b}_j), \mathbf{P}_1\mathbbm{1}=\cdots=\mathbf{P}_J\mathbbm{1}=\mathbf{a},   \langle \textbf{P}_j, \mathbbm{1}\rangle = \theta, \forall j \right\},
\end{align}
one has that 
\begin{align}\label{eqn:barycenter_temp01}
\sum_{j=1}^J \lambda_j \langle \textbf{P}^{\bar{\theta},*}_j - \textbf{P}^{\theta, *}_j, \textbf{C} \rangle \le   \gamma \sum_{j=1}^J \lambda_j \langle \textbf{P}^{\bar{\theta},*}_j - \textbf{P}^{\theta, *}_j, \mathbbm{1} \rangle, \quad \text{for some }\gamma>0,
\end{align}
then for any $\boldsymbol\upnu = (\boldsymbol\upnu_j)_j$ such that $\theta = \| \mathbf{b}_1 - \boldsymbol\upnu_1\|_1 = \cdots = \| \mathbf{b}_J - \boldsymbol\upnu_J\|_1$, it implies that
\begin{align*}
\|\boldsymbol\upnu\|_1  -  \|\boldsymbol\upnu^{\text{opt}}\|_1  
= \langle \boldsymbol\upnu - \boldsymbol\upnu^{\text{opt}}, \mathbbm{1} \rangle 
= J \sum_{j=1}^J \lambda_j \left\langle \textbf{P}^{\bar{\theta},*}_j - \textbf{P}^{\theta,*}_j, \mathbbm{1} \right\rangle
\ge  J\gamma^{-1} \sum_{j=1}^J \lambda_j \left\langle \textbf{P}^{\bar{\theta},*}_j - \textbf{P}^{\theta,*}_j, \mathbf{C} \right\rangle,
\end{align*}
leading to
\[
J\sum_{j=1}^J \lambda_j  \left\langle \mathbf{P}^{\bar{\theta},*}_j, \mathbf{C} \right\rangle + \gamma \|\boldsymbol\upnu^{\text{opt}}\|_1
\le J\sum_{j=1}^J \lambda_j \left\langle \mathbf{P}^{\theta,*}_j, \mathbf{C} \right\rangle + \gamma \|\boldsymbol\upnu\|_1,
\]
which implies the optimality of $(\boldsymbol\upnu^{\text{opt}},\mathbf{P}^{\bar{\theta},*})$ for (\ref{eqn:UUOTBarycenter02}).

Now we prove that there exists a $\gamma>0$ such that (\ref{eqn:barycenter_temp01}) holds. Using lemma \ref{lemma:polytope} and taking the bounded convex polyhedron $C$ to be the set of $\mathbf{P} = (\mathbf{P}_j)_j$ defined by the constraints
\begin{align*}
\begin{cases}
\mathbf{P}_j^T\mathbbm{1}\le \mathbf{b}_j,  \mathbf{P}_j\ge0, \quad j=1:J \\
\mathbf{P}_j\mathbbm{1} = \mathbf{P}_{j+1}\mathbbm{1}, \quad j=1:J-1 \\
\mathbf{P}_j\ge0, \quad j=1:J \\
(\mathbf{P}_j)_{kl}=0, \quad (k,l)\in\mathcal{P}_{\infty}(\mathbf{C}), \quad j=1:J \\
\end{cases}
\end{align*}
and 
\[
C_{\theta} = \{\mathbf{P}\in C: \langle \mathbf{P}, \mathbbm{1} \rangle: = \sum_{j=1}^J \langle \mathbf{P}_j , \mathbbm{1} \rangle = J\theta  \},
\]
we know that there exists a critical $\theta_0$ such that for any given $\theta \in (\theta_0,\bar{\theta}]$ and $\mathbf{P}^{\theta,*}$ defined in (\ref{eqn:barycenter_temp00}), we can find a feasible $\mathbf{P}^{\bar{\theta}}$ satisfying $\langle \mathbf{P}^{\bar{\theta}}, \mathbbm{1} \rangle = J\bar{\theta}$, such that
\begin{align}
\| \mathbf{P}^{\bar{\theta}} - \mathbf{P}^{\theta,*} \|_1 \le J\eta|\bar{\theta}-\theta|.
\end{align}
Then 
\begin{align}
\sum_{j=1}^J \lambda_j \left\langle \textbf{P}^{\bar{\theta},*}_j - \textbf{P}^{\theta,*}_j, \textbf{C} \right\rangle & \le
\sum_{j=1}^J \lambda_j \left\langle \textbf{P}^{\bar{\theta}}_j - \textbf{P}^{\theta,*}_j, \textbf{C} \right\rangle \nonumber\\
& \le \frac{1}{J}\| \mathbf{P}_{\bar{\theta}} - \mathbf{P}_{\theta}^* \|_1 \cdot \|\mathbf{C}\|_{\infty} \nonumber \\
& \le \|\mathbf{C}\|_{\infty}\cdot\eta|\bar{\theta}-\theta| \nonumber \\
& = \|\mathbf{C}\|_{\infty} \cdot \eta  \sum_{j=1}^J \lambda_j  \left\langle \textbf{P}^{\bar{\theta},*}_j - \textbf{P}^{\theta,*}_j, \mathbbm{1} \right\rangle. \label{eqn:barycenter_temp06}
\end{align}
On the other hand, for any feasible $\theta\le\theta_0$ and $\mathbf{P}^{\theta,*}$ defined in (\ref{eqn:barycenter_temp00}), we simply have
\begin{align}
\sum_{j=1}^J \lambda_j \left\langle \textbf{P}^{\bar{\theta},*}_j - \textbf{P}^{\theta,*}_j, \textbf{C} \right\rangle & 
= \sum_{j=1}^J \lambda_j  \langle \textbf{P}^{\bar{\theta},*}_j, \textbf{C} \rangle -
   \sum_{j=1}^J \lambda_j  \langle \textbf{P}^{\theta,*}_j, \textbf{C} \rangle  \nonumber\\
& \le \|\mathbf{C}\|_{\infty}\cdot \bar{\theta} \nonumber\\
&=\|\mathbf{C}\|_{\infty}\cdot \frac{\bar{\theta}}{\bar{\theta}-\theta}(\bar{\theta}-\theta) \nonumber\\
&\le \|\mathbf{C}\|_{\infty}\cdot \frac{\bar{\theta}}{\bar{\theta}-\theta_0}(\bar{\theta}-\theta) \nonumber \\
&= \|\mathbf{C}\|_{\infty}\cdot \frac{\bar{\theta}}{\bar{\theta}-\theta_0} \eta  \sum_{j=1}^J \lambda_j  \left\langle \textbf{P}^{\bar{\theta},*}_j - \textbf{P}^{\theta,*}_j, \mathbbm{1} \right\rangle. \label{eqn:barycenter_temp07}
\end{align}
Finally, combining the inequalities (\ref{eqn:barycenter_temp06}) and (\ref{eqn:barycenter_temp07}) and taking $\gamma = \max\{\eta, \bar{\theta}/(\bar{\theta}-\theta_0)\}\cdot \|\mathbf{C}\|_{\infty}$, we prove the inequality (\ref{eqn:barycenter_temp01}), and therefore the equivalence between two formulations.

\end{proof}

Note that the feasible set $\mathcal{F}$ is always non-empty, the formulation (\ref{eqn:UUOTBarycenter02}) can be recast into the form
\begin{align}\label{eqn:UUOTBarycenter03}
\min_{\substack{(\mathbf{P}_j, \mathbf{a}) \\ \mathbf{P}_j\in\mathbf{U}( = \mathbf{a}, \le\mathbf{b}_j)}} \sum_{j=1}^J \lambda_j \langle \mathbf{P}_j, \mathbf{C} \rangle + \gamma \sum_{j=1}^J \| \mathbf{b}_j - \mathbf{P}_j^T\mathbbm{1}\|_1.
\end{align}
Here we replace $\frac{\gamma}{J}$ by $\gamma$ in the equivalent formulation for the sake of simple notation.

%Given a normalized weight $\lambda = (\lambda_j)\in \text{int}(\Delta_J)$, we consider the sOT barycenter problem:
%\begin{align}
%\min_{\mathbf{a}\in\mathbb{R}_+^m} \sum_{j=1}^J \lambda_j L_{\mathbf{C}}^{\epsilon}(\mathbf{a},\mathbf{b}_j).
%\end{align}
%It can be recast into an equivalent form:
%\begin{align}
%\min_{(\mathbf{P}_j)_j}\{ \sum_j \lambda_j \langle \mathbf{P}_j, \mathbf{C} \rangle : \mathbf{P}_j^T\mathbbm{1} = \mathbf{b}_j, \mathbf{P}_1 \mathbbm{1} = \mathbf{a}, \text{ for some } \mathbf{a} \}
%\end{align}

\subsection{Entropic regularization of the sOT barycenter problem}

In this section, we consider the entropic regularization for the weighted sOT barycenter problem. To this end, we introduce the following notations:
\begin{align}
&\text{KL}_{\lambda}(\textbf{P}|\textbf{Q}): = \sum_{j=1}^J \lambda_j \text{KL}( \mathbf{P}_j| \mathbf{Q}_j),\quad \text{where\ } \textbf{P} = (\mathbf{P}_j)_{j}\in(\mathbb{R}_+^{n\times m})^J, \textbf{Q} = (\mathbf{Q}_j)_{j}\in(\mathbb{R}_{++}^{n\times m})^J, \label{eqn:KL_lambda}\\
&\hat{h}_1(\mathbf{P}) = \iota_{\mathcal{D}}(\mathbf{P}_{1}\mathbbm{1}, \cdots,  \mathbf{P}_{J}\mathbbm{1}),  \quad \mathcal{D}:= \left\{(\mathbf{p}_1, \cdots, \mathbf{p}_J)\in (\mathbb{R}_+^m)^J : \mathbf{p}_1 = \cdots = \mathbf{p}_J  \right\},\label{eqn:h_hat1}\\
&\hat{h}_2(\mathbf{Q}) = \gamma \| \mathbf{b}-\mathbf{Q}^T\mathbbm{1}\|_1 + \iota_{[0,\textbf{b}]}(\textbf{Q}^T\mathbbm{1}) := \sum_{j=1}^J \Big(\gamma\| \mathbf{b}_j-\mathbf{Q}_j^T\mathbbm{1}\|_1 + \iota_{[0,\mathbf{b}_j]}(\mathbf{Q}_j^T\mathbbm{1}) \Big). \label{eqn:h_hat2} \\
&h_1(\mathbf{p}) =  \iota_{\mathcal{D}}(\mathbf{p}_{1}, \cdots,  \mathbf{p}_{J}), \quad \text{for\ } \textbf{p} = (\mathbf{p}_j)_{j}, \label{eqn:h_1}\\
&h_2(\mathbf{q}) = \gamma \| \mathbf{b}-\textbf{q}\|_1 + \iota_{[0,\textbf{b}]}(\textbf{q}) = \sum_{j=1}^J \Big(\gamma\| \mathbf{b}_j-\mathbf{q}_j\|_1 + \iota_{[0,\mathbf{b}_j]}(\mathbf{q}_j) \Big), \quad \text{for\ } \textbf{q} = (\mathbf{q}_j)_{j=1}^n.  \label{eqn:h_2}
\end{align}
Note that $\hat{h}_i(\mathbf{P}) = h_i(\mathbf{P}\mathbbm{1}), i=1,2.$
With the above notations, we can formulate the entropic regularized sOT barycenter problem as
\begin{align}
\min_{\mathbf{P} \in (\mathbb{R}_{+}^{n\times m})^J}   \sum_{j=1}^J \lambda_j \Big(\langle \mathbf{P}_j, \mathbf{C} \rangle - \epsilon  H(\mathbf{P}_j) \Big) + \hat{h}_1(\mathbf{P}) +  \hat{h}_2(\mathbf{P}),
\end{align}
or equivalently in terms of the KL divergence, 
\begin{align}\label{eqn:UUOTBarycenter04}
\min_{\mathbf{P} \in (\mathbb{R}_{+}^{n\times m})^J}   \text{KL}_{\lambda}(\mathbf{P}|\mathbf{K}) + \frac{1}{\epsilon}\hat{h}_1(\mathbf{P}) +  \frac{1}{\epsilon} \hat{h}_2(\mathbf{P}),
\end{align}
where $\mathbf{K} = (\mathbf{K}_j)_{j=1}^J$ with $\mathbf{K}_j = e^{-\mathbf{C}/\epsilon}, j=1,\cdots,J$. 

To solve the entropic regularized sOT barycenter problem (\ref{eqn:UUOTBarycenter04}), we adopt the generic diagonal scaling algorithm introduced in \cite{Peyre_SIIS2015} (also see \cite{Chizat_2018}),  in which each iterate $\mathbf{P}$ has the diagonal scaling decomposition
\begin{align}
\mathbf{P}^{(n)} = (\mathbf{P}^{(n)} _j)_j = \left( \text{diag}(\mathbf{u}^{(n)} _j)\mathbf{K}\text{diag}(\mathbf{v}^{(n)} _j) \right)_j.
\end{align}
With a slight abuse of notation, we denote, consistent with $\mathbf{P} = (\mathbf{P}_j)_j$,
\[
\mathbf{u} = (\mathbf{u}_1, \cdots, \mathbf{u}_J) \in (\mathbb{R}^n)^J, \quad \mathbf{v} = (\mathbf{v}_1, \cdots, \mathbf{v}_J) \in (\mathbb{R}^m)^J.
\]
Then the diagonal scaling algorithm reads
\begin{tcolorbox}
Input:  $\mathbf{u}^{(0)} = \mathbf{v}^{(0)}  = \mathbbm{1}$; \\
General step: for any $n=0,1,2,\cdots$ execute the following steps:
\begin{align}
\mathbf{u}^{(2n+1)}_j &= \frac{ \left[ \text{prox}_{h_1}^{\text{KL}_{\lambda}}\big( \mathbf{K} \mathbf{v}^{(2n)} \big) \right]_j}{\mathbf{K} \mathbf{v}_j^{(2n)}}, \ \mathbf{v}_j^{(2n+1)} = \mathbf{v}_j^{(2n)}, \quad \forall j; \\
\mathbf{v}^{(2n+2)}_j &= \frac{ \left[ \text{prox}_{h_2}^{\text{KL}_{\lambda}}\big( \mathbf{K}^T \mathbf{u}^{(2n)} \big) \right]_j}{\mathbf{K}^T \mathbf{u}_j^{(2n)}}, \ \mathbf{u}_j^{(2n+2)} = \mathbf{u}_j^{(2n+1)}, \quad \forall j.
\end{align}
\end{tcolorbox}

Note that one needs to compute the two proximal operator $\text{prox}_{h_1}^{\text{KL}_{\lambda}}$ and $\text{prox}_{h_2}^{\text{KL}_{\lambda}}$ for $h_1$ and $h_2$ defined in (\ref{eqn:h_1})-(\ref{eqn:h_2}) to implement the diagonal scaling algorithm. The following lemma shows that the two proximal operators for the KL divergence can be computed in closed form. The derivation is similar to that of Proposition 5.1 in \cite{Peyre_SIIS2015}, so we omit the details.

\begin{lemma}
For any $\mathbf{p} = (\mathbf{p}_j)_j\in(\mathbb{R}^n)^J$, and $h_1$ and $h_2$ defined in (\ref{eqn:h_1})-(\ref{eqn:h_2}), one has
\begin{align}\label{eqn:prox_h1h2}
 \left[ \emph{prox}_{h_1}^{\emph{KL}_{\lambda}}\big( \mathbf{p} \big) \right]_j = \mathbf{p}_1^{\lambda_1} \odot \cdots \odot \mathbf{p}_J^{\lambda_J}, \quad  
 \left[ \emph{prox}_{h_2}^{\emph{KL}_{\lambda}}\big( \mathbf{p} \big) \right]_j = \min \left\{ e^{\frac{\gamma}{\lambda_j\epsilon}}\mathbf{p}_j, \mathbf{b}_j \right\}. 
 \end{align}
Additionally, for any $\mathbf{P} = (\mathbf{P}_j)_j\in(\mathbb{R}^{n\times m})^J$, and $\hat{h}_1$ and $\hat{h}_2$ defined in (\ref{eqn:h_hat1})-(\ref{eqn:h_hat2}), the two proximal operators $\emph{prox}_{\hat{h}_1}^{\emph{KL}_{\lambda}}$ and $\emph{prox}_{\hat{h}_2}^{\emph{KL}_{\lambda}}$ are related to (\ref{eqn:prox_h1h2}) as
\begin{align}
\left[\emph{prox}_{\hat{h}_1}^{\emph{KL}_{\lambda}}(\mathbf{P})\right]_j = \emph{diag}\left( \frac{  \left[ \emph{prox}_{h_1}^{\emph{KL}_{\lambda}}\big( \mathbf{P}\mathbbm{1} \big) \right]_j  }{\mathbf{P}_j\mathbbm{1}}  \right) \mathbf{P}_j, \ 
\left[\emph{prox}_{\hat{h}_2}^{\emph{KL}_{\lambda}}(\mathbf{P})\right]_j = \mathbf{P}_j \emph{diag}\left( \frac{  \left[ \emph{prox}_{h_2}^{\emph{KL}_{\lambda}}\big( \mathbf{P}^T\mathbbm{1} \big) \right]_j  }{\mathbf{P}^T_j\mathbbm{1}}  \right), 
\end{align}
in which
\[
\mathbf{P}\mathbbm{1} := (\mathbf{P}_1\mathbbm{1},\cdots \mathbf{P}_J\mathbbm{1}), \quad 
\mathbf{P}^T\mathbbm{1} := (\mathbf{P}^T_1\mathbbm{1},\cdots \mathbf{P}^T_J\mathbbm{1}).
\]
\end{lemma}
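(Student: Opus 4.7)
The plan is to derive the vector-level prox formulas for $h_1$ and $h_2$ first, which reduce to closed-form minimizations by strict convexity and (for $h_2$) separability, and then transfer them to the matrix-level formulas for $\hat h_1$ and $\hat h_2$ via the row/column chain-rule decomposition of the Kullback-Leibler divergence, in the spirit of Proposition 5.1 of \cite{Peyre_SIIS2015}.

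For $h_1$, the indicator $\iota_{\mathcal D}$ collapses the $J$ blocks of the argument into a single vector $\mathbf q$, so the prox problem reduces to $\min_{\mathbf q\ge 0}\sum_{j=1}^J \lambda_j\,\text{KL}(\mathbf q|\mathbf p_j)$. This objective is entrywise separable and strictly convex; setting $\partial/\partial q_i = \sum_j \lambda_j\log(q_i/p_{j,i}) = 0$ and using $\sum_j\lambda_j = 1$ gives $\log q_i = \sum_j \lambda_j\log p_{j,i}$, i.e., the weighted geometric mean $\mathbf p_1^{\lambda_1}\odot\cdots\odot\mathbf p_J^{\lambda_J}$, which is then copied into each of the $J$ output blocks. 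For $h_2$, the objective splits across $j$, so one solves $\min_{\mathbf q_j\in[0,\mathbf b_j]} \lambda_j\,\text{KL}(\mathbf q_j|\mathbf p_j)+\gamma\|\mathbf b_j-\mathbf q_j\|_1$ independently for each $j$. Dividing by $\lambda_j$ reduces this to the scalar problem treated in Lemma \ref{lemma:prox_OptionI}, with the same two-case argument (on whether $\mathbf p_j\le\mathbf b_j$ componentwise) producing $\min\{e^{\gamma/(\lambda_j\epsilon)}\mathbf p_j,\mathbf b_j\}$; the $1/\epsilon$ factor is carried over from the rescaling that converts (\ref{eqn:UUOTBarycenter04}) into a pure KL-minimization, exactly as in the passage from Lemma \ref{lemma:prox_OptionI} to the Sinkhorn box following it.

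For the matrix-level formulas, the key observation is that $\hat h_i$ depends on $\mathbf P$ only through the row marginal $\mathbf P\mathbbm 1$ (for $i=1$) or the column marginal $\mathbf P^T\mathbbm 1$ (for $i=2$). Writing the marginal-conditional decomposition
\begin{equation*}
\text{KL}(\mathbf Q_j|\mathbf K_j) = \text{KL}(\mathbf Q_j\mathbbm 1|\mathbf K_j\mathbbm 1) + \sum_{i}(\mathbf Q_j\mathbbm 1)_i\,\text{KL}\!\left(\tfrac{(\mathbf Q_j)_{i,\cdot}}{(\mathbf Q_j\mathbbm 1)_i}\Big|\tfrac{(\mathbf K_j)_{i,\cdot}}{(\mathbf K_j\mathbbm 1)_i}\right),
\end{equation*}
and its column-wise analogue, we see that once the marginal of $\mathbf Q_j$ is fixed, the conditional-KL term is minimized by matching the row (resp.\ column) proportions of $\mathbf P_j$, so that the optimal $\mathbf Q_j$ equals $\mathbf P_j$ rescaled rowwise (resp.\ columnwise) to have the prescribed marginal. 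The prescribed marginal is in turn the vector-level prox already computed, which yields exactly the diagonal-scaling expressions stated, with left multiplication by a diagonal for $\hat h_1$ and right multiplication for $\hat h_2$.

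The only delicate point is the marginal-conditional decomposition at indices where $(\mathbf P_j\mathbbm 1)_i$ or $(\mathbf P_j^T\mathbbm 1)_k$ vanishes; this is resolved by the convention $0\log 0 = 0$ together with the observation that $\mathbf K_j = e^{-\mathbf C/\epsilon}$ is strictly positive off the infinity pattern $\mathcal P_\infty(\mathbf C)$, so that every iterate shares the zero pattern of $\mathbf K_j$ and the conditional normalization is well-defined on the effective support.
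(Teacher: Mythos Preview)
Your proposal is correct and follows essentially the same route the paper indicates: the paper omits the proof entirely, citing Proposition 5.1 of \cite{Peyre_SIIS2015}, and your argument is precisely an unpacking of that proposition adapted to $\text{KL}_\lambda$ and to the specific $h_1,h_2$ at hand. One minor slip: in the marginal--conditional decomposition you write $\mathbf K_j$ where the prox argument $\mathbf P_j$ is meant, but the computation is unaffected.
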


With the proximal operators computed in (\ref{eqn:prox_h1h2}), the diagonal scaling algorithm in a directly implementable form
becomes
\begin{tcolorbox}
Input:  $\mathbf{u}^{(0)} = \mathbf{v}^{(0)}  = \mathbbm{1}$; \\
General step: for any $n=0,1,2,\cdots$ execute the following steps:
\begin{align}
&\mathbf{u}_j^{(n+1)} = \frac{\mathbf{a}^{(n)}}{\mathbf{K}\mathbf{v}_j^{(n)}}, \quad j=1:J, \quad \text{where } \mathbf{a}^{(n)} = \prod_j \left(\mathbf{K}\mathbf{v}_j^{(n)}\right)^{\lambda_j},  \\
&\mathbf{v}_j^{(n+1)} = \min \left\{ \frac{\mathbf{b}_j}{\mathbf{K}^T\mathbf{u}_j^{(n)}}, e^{\frac{\gamma}{\lambda_j\epsilon}} \right\}, \quad j=1:J.
\end{align}
\end{tcolorbox}

\subsection{Log-domain implementation}

One drawback for the diagonal scaling algorithm (Sinkhorn algorithm) is that it suffers from numerical overflow when the regularization parameter $\epsilon$ is too small compared to the entries of the cost matrix $\mathbf{C}$. This drawback is even more severe for the sOT problem as it will cause some entries of $\mathbf{K} = e^{-\mathbf{C}/\epsilon}$ being regarded as zero due to the numerical overflow, even they should not. In other words, having more zero entries in $\mathbf{K}$ because of the smallness of $\epsilon$ will change the 0-pattern of $\mathbf{K}$ and consequently the $\infty$-pattern of $\mathbf{C}$. Therefore it is necessary to implement the diagonal scaling algorithm for sOT barycenter problem in the log-domain.

Using the log-sum-exp stabilization trick for the soft-minimization, and noting the primal-dual relation
\[
(\mathbf{u}^{(n)},\mathbf{v}^{(n)}) = \left(e^{\mathbf{f}^{(n)}/\epsilon}, e^{\mathbf{g}^{(n)}/\epsilon} \right),
\] 
the log-domain implementation for the diagonal scaling algorithm reads
\begin{tcolorbox}
Input:  $\mathbf{f}^{(0)} = \mathbf{g}^{(0)}  = 0$; \\
General step: for any $n=0,1,2,\cdots$ execute the following steps:
\begin{align}
&\mathbf{f}_j^{(n+1)} = \sum_{i} \lambda_i \left[ \epsilon \log \left( e^{(\mathbf{f}_i^{(n)}\oplus \mathbf{g}_i^{(n)} - \mathbf{C})/\epsilon}  \mathbbm{1} \right) - \mathbf{f}_i^{(n)} \right ] - \left[ \epsilon \log \left( e^{(\mathbf{f}_j^{(n)}\oplus \mathbf{g}_j^{(n)} - \mathbf{C})/\epsilon}  \mathbbm{1} \right) - \mathbf{f}_j^{(n)} \right ] ,  \\
&\mathbf{g}_j^{(n+1)} = \min \left\{ \epsilon\log(\mathbf{b}_j) -  \epsilon \log \left( e^{(\mathbf{f}_j^{(n+1)}\oplus \mathbf{g}_j^{(n)} - \mathbf{C})^T/\epsilon}  \mathbbm{1} \right) + \mathbf{g}_j^{(n)}  , \frac{\gamma}{\lambda_j} \right\}.
\end{align}
for all $j=1:J$.
\end{tcolorbox}

\subsection{Special case in which $\lambda \in \partial(\Delta_J)$}

In this subsection, we point out an important difference between the standard OT barycenter problem and the sOT one. For the sake of simplicity, we take $J = 2$. 

Note that when $\lambda = (0,1)$, the OT barycenter problem degenerates to the standard OT problem. More precisely
\[
\min_{\mathbf{a}}\lambda_1 L_{\text{OT}}(\mathbf{a},\mathbf{b}_1) + \lambda_2 L_{\text{OT}}(\mathbf{a},\mathbf{b}_2),
\]
reduces to 
\[
\min_{\mathbf{a}}  L_{\text{OT}} (\mathbf{a},\mathbf{b}_2),
\]
which leads to $\mathbf{a} = \mathbf{b}_2$ and $\mathbf{P}_2 = \text{diag}(\mathbf{b}_2)$. Then $\mathbf{P}_1$ is determined by the standard OT 
\[
L_{\text{OT}}(\mathbf{a},\mathbf{b}_1) = \min_{\mathbf{P}_1 \in \mathbf{U}(\mathbf{a},\mathbf{b}_1)} \langle \mathbf{P}_1,\mathbf{C} \rangle.
\]
Additionally, the entropic OT barycenter problem reduces to the entropic OT problem. In other words, 
\[
\min_{\mathbf{a}}\lambda_1 L^{\epsilon}_{\text{OT}}(\mathbf{a},\mathbf{b}_1) + \lambda_2 L^{\epsilon}_{\text{OT}}(\mathbf{a},\mathbf{b}_2),
\]
reduces to 
\[
\min_{\mathbf{a}}  L^{\epsilon}_{\text{OT}} (\mathbf{a},\mathbf{b}_2),
\]
which leads to $\mathbf{a} = \frac{\mathbf{K}\mathbf{b}_2}{\mathbf{K}^T\mathbbm{1}}$ and $\mathbf{P}_2 = \mathbf{K} \text{diag}\left(\frac{\mathbf{b}_2}{\mathbf{K}^T\mathbbm{1}}\right)$. Then $\mathbf{P}_1$ is determined by the entropic OT
\[
L_{\text{OT}}^{\epsilon}(\mathbf{a},\mathbf{b}_1) = \min_{\mathbf{P}_1 \in \mathbf{U}(\mathbf{a},\mathbf{b}_1)} \epsilon \text{KL}(\mathbf{P}_1 | \mathbf{K}).
\]

However, such degeneration does not apply to the sOT barycenter problem,
\[
\lim_{\lambda_1 \rightarrow 0} \Big( \min_{\mathbf{a}}\lambda_1 L_{\text{sOT}}(\mathbf{a},\mathbf{b}_1) + \lambda_2 L_{\text{sOT}}(\mathbf{a},\mathbf{b}_2) \Big) \neq
\min_{\mathbf{a}}  L_{\text{sOT}} (\mathbf{a},\mathbf{b}_2) .
\]
To elucidate the idea, we take the cost matrix $\mathbf{C}$ as in (\ref{eqn:CostMatrix}) with $C_{\text{cut}} = 0.3$, namely, any mass can only be transported within the distance no longer than $C_{\text{cut}}$. We take $\mathbf{y}\in\mathbb{R}^{n+1}$ be a uniform mesh over $[0,1]$ with $h = 1/n$ being the mesh spacing, and let $\mathbf{b}_1=\mathbbm{1}$ (and $\mathbf{b}_2=\mathbbm{1}$, respectively) be uniform distribution on $\mathbf{y}$ compactly supported over $[0.1, 0.3]$ (and $[0.7,0.9]$, respectively). For any $\lambda = (\lambda_1, \lambda_2) \in \text{int}(\Delta_2)$, the sOT barycenter $\mathbf{a}$ must be the uniform distribution $\mathbf{a} = \mathbbm{1}$ on $\mathbf{y}$ compactly supported over $[0.4,0.6]$. The corresponding total cost is
\[
\lambda_1(0.3)^2 + \lambda_2 (0.3)^2 = (0.3)^2.
\]
This is because any other possible transport plan will cause some mass, even only a bit, being transported from either $\mathbf{b}_1$ or $\mathbf{b}_2$ to anywhere beyond $[0.4,0.6]$, resulting in an infinite cost. On the other hand, when $\lambda = (0, 1) \in \partial(\Delta_2)$, the sOT barycenter is determined by
\[
\min_{\mathbf{a}} L_{\text{sOT}}(\mathbf{a},\mathbf{b}_2). 
\]
Since it is unrelated to $L_{\text{sOT}}(\mathbf{a},\mathbf{b}_1)$, we can take $\mathbf{a} = \mathbf{b}_2$ such that the total cost equals zero. Hence the degeneration leads to a discontinuity for the total cost. To resolve this issue, we define the degenerate sOT problem for $\tilde{\lambda} \in \partial(\Delta_J)$ as the limiting problem when $\text{int}(\Delta_J) \ni \lambda \rightarrow \tilde{\lambda}$,
\[
 \min_{\mathbf{a}} \sum_{j} \tilde{\lambda}_j L_{\text{sOT}}(\mathbf{a},\mathbf{b}_j) : = 
\lim_{\text{int}(\Delta_J) \ni \lambda \rightarrow \tilde{\lambda}} \Big( \min_{\mathbf{a}} \sum_j \lambda_j L_{\text{sOT}}(\mathbf{a},\mathbf{b}_j) \Big).
\]

\subsection{Reverse and portion selection mechanism}

We can further perform theoretical analysis on the limiting behavior of the barycenter sOT problem. For the sake of simplicity, we still take $J = 2$. Let $\mathbf{y} = (y_i)_i\in\mathbb{R}^{n}$ be the uniform mesh over $[0,1-h]$ with $h = 1/n$. 

To begin with, we define a cumulative sum inequality as follows. For any two vectors $\mathbf{u}, \mathbf{v} \in\mathbb{R}^n$, we say $\mathbf{u}$ is cumulatively less than or equal to $\mathbf{v}$ and denote by $\mathbf{u} \le_{\text{C}} \mathbf{v}$ if they satisfy
\begin{align}\label{eqn:cumsum_def}
\sum_{j=1}^k u_j \le \sum_{j=1}^k v_j, k=1:n-1; \quad \text{and\ } \sum_{j=1}^n u_j = \sum_{j=1}^n v_j.
\end{align}
$\mathbf{u}$ is strictly cumulatively less than $\mathbf{v}$ and denote by $\mathbf{u} <_{\text{C}} \mathbf{v}$ if at least one inequality in (\ref{eqn:cumsum_def}) is strict for $k=1:n-1$.  We denote by $\mathbf{u} =_{\text{C}} \mathbf{v}$ if all the inequalities in (\ref{eqn:cumsum_def}) are equality. It is clear that $\mathbf{u} =_{\text{C}} \mathbf{v}$ if and only if $\mathbf{u} = \mathbf{v}$.

Let $T^t$ be a periodic shift operator for any periodic function $f(x)$ over $[0,1)$ such that $(T^t f)(x) = f(x+t)$. We take two nonnegative periodic functions $b_1(x),b_2(x)$ over $[0,1)$ with compact support $[0,0.2]$, and let them satisfy the cumulative sum inequality 
\begin{align}\label{eqn:cumsum_inequality_b1b2}
b_1|_{\mathbf{y} \cap [0,0.2]} \le_{\text{C}} b_2|_{\mathbf{y} \cap [0,0.2]}.
\end{align}
%\begin{align}\label{eqn:f1f2}
%\begin{cases}
%\sum_{\mathbf{y}_j \in \mathbf{y}:\  0 \le \mathbf{y}_j \le \mathbf{y}_i } b_1(\mathbf{y}_j) \le 
%\sum_{\mathbf{y}_j \in \mathbf{y}:\ 0  \le \mathbf{y}_j \le \mathbf{y}_i } b_2(\mathbf{y}_j), \quad \forall \ \mathbf{y}_i \in \mathbf{y} \cap [0,0.2), \\
%\sum_{\mathbf{y}_j \in \mathbf{y}:\  0 \le \mathbf{y}_j \le 0.2} b_1(\mathbf{y}_j) \le 
%\sum_{\mathbf{y}_j \in \mathbf{y}:\ 0  \le \mathbf{y}_j \le 0.2} b_2(\mathbf{y}_j).
%\end{cases}
%\end{align}
We define two marginal distributions $\mathbf{b}_1$ and $\mathbf{b}_2$ as
\begin{align}\label{eqn:b1b2}
\mathbf{b}_1 = (T^{0.1}b_1(x))|_{\mathbf{y}}, 
\quad
\mathbf{b}_2 = (T^{0.7}b_2(x))|_{\mathbf{y}}.
\end{align}
The cost matrix $\mathbf{C}$ is taken as
\begin{align}\label{eqn:CostMatrix0}
\mathbf{C}_{ij} =
\begin{cases}
| y_i - y_j |^2, \quad &\text{if} \ | y_i - y_j | \le 0.3, \\
\infty, \quad &\text{if} \ | y_i - y_j | > 0.3.
\end{cases}
\end{align}

In the following lemma, we characterize the marginal distributions which are of the same amount of mass as $\mathbf{b}_1$ and $\mathbf{b}_2$, and can be completely transported to $\mathbf{b}_1$ and $\mathbf{b}_2$ given the cost matrix $\mathbf{C}$ in (\ref{eqn:CostMatrix0}). 
\begin{lemma}\label{lemma:character_a}
Given marginal distributions $\mathbf{b}_1$ and $\mathbf{b}_2$ as in (\ref{eqn:b1b2}), a marginal distribution $\mathbf{a}$ can be completely transported to both $\mathbf{b}_1$ and $\mathbf{b}_2$ if and only if 
\begin{align}\label{eqn:a_character}
\mathbf{a} = (T^{0.4}a(x))|_{\mathbf{y}}
\end{align}
in which $a(x)$ is a nonnegative periodic function over $[0,1)$ with compact support $[0,0.2]$ and satisfies 
\begin{align}\label{eqn:cumsum_condition}
b_1(x)|_{\mathbf{y}\cap[0,0.2]} \le_{\emph{C}} a(x)|_{\mathbf{y}\cap[0,0.2]} \le_{\emph{C}} b_2(x)|_{\mathbf{y}\cap[0,0.2]}.
\end{align}
\end{lemma}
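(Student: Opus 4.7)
The plan is to first pin down the support of $\mathbf{a}$, then translate finite-cost transport feasibility into the cumulative sum inequalities (necessity), and finally construct explicit transport plans when the inequalities hold (sufficiency). The cost matrix $\mathbf{C}$ in (\ref{eqn:CostMatrix0}) forbids coupling any two points more than $0.3$ apart. Since $\mathbf{b}_1$ is supported in $[0.1, 0.3]$, any feasible $\mathbf{a}$ must have support in $[0, 0.6]$; similarly, compatibility with $\mathbf{b}_2$ (supported in $[0.7, 0.9]$) forces support in $[0.4, 1.0]$. Intersecting yields $\operatorname{supp}(\mathbf{a}) \subseteq [0.4, 0.6]$, so $\mathbf{a} = (T^{0.4} a(x))|_{\mathbf{y}}$ for some nonnegative $a$ supported in $[0, 0.2]$, and complete transportability forces $\|a\|_1 = \|b_1\|_1 = \|b_2\|_1$.

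\textbf{Necessity of the cumulative sum inequalities.} Using local coordinates $x, y \in \mathbf{y} \cap [0, 0.2]$, the finite-cost constraint for a plan $\mathbf{P}_1$ from $\mathbf{a}$ (at $0.4+x$) to $\mathbf{b}_1$ (at $0.1+y$) reduces to $|0.3 + x - y| \le 0.3$, i.e., $y \ge x$. Summing the row-sum constraints of $\mathbf{P}_1$ over source positions $> x_k$ gives $\sum_{j > k} a_j \le \sum_{j > k} (b_1)_j$, which via equality of totals is precisely $b_1 \le_{\mathrm{C}} a$. The analogous computation for $\mathbf{b}_2$ (at $0.7 + y$) yields the local constraint $y \le x$ and therefore $a \le_{\mathrm{C}} b_2$.

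\textbf{Sufficiency.} When $a$ satisfies (\ref{eqn:cumsum_condition}), I would construct explicit finite-cost plans via the classical one-dimensional ``northwest-corner'' rule. For the plan into $\mathbf{b}_2$, writing $A_k = \sum_{k' \le k} a_{k'}$ and $B_k = \sum_{k' \le k} (b_2)_{k'}$, set $(P_2)_{kj} = \max\!\left(0,\ \min(A_k, B_j) - \max(A_{k-1}, B_{j-1})\right)$. A telescoping check confirms that $\mathbf{P}_2$ is a valid coupling of $\mathbf{a}$ and $\mathbf{b}_2$, and whenever $j > k$ the inequality $B_{j-1} \ge B_k \ge A_k$, a direct consequence of $a \le_{\mathrm{C}} b_2$, forces $(P_2)_{kj} = 0$. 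Hence $\mathbf{P}_2$ is supported on $\{y \le x\}$, which is precisely the finite-cost region. A symmetric construction, applied to the reversed coordinate and using $b_1 \le_{\mathrm{C}} a$, produces the required $\mathbf{P}_1$ supported on $\{y \ge x\}$.

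The main obstacle is the sufficiency direction: one must verify that cumulative majorization is not just necessary but also sufficient for a constrained one-dimensional transport to exist. The inequality $B_{j-1} \ge A_k$ in the northwest-corner construction is the crux; once this is established, the rest of the argument amounts to bookkeeping on mass conservation and index ranges.
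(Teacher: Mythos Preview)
Your proposal is correct and follows essentially the same route as the paper: both use the north-west corner (monotone) coupling for sufficiency and a mass-counting argument on the lower/upper triangular support for necessity. Your version is in fact slightly more complete, since you explicitly derive the support constraint $\operatorname{supp}(\mathbf{a})\subseteq[0.4,0.6]$ (which the paper's proof tacitly assumes) and you give a closed-form expression for the coupling that makes the triangular-support check $B_{j-1}\ge A_k$ a one-line computation rather than the paper's verbal argument about row saturation.
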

\begin{proof}
We only consider the case for $\mathbf{P}_2$. That for $\mathbf{P}_1$ is similar.

First, we show that if the cumulative sum inequality (\ref{eqn:cumsum_condition}) holds, then there exists some plan $\mathbf{P}_2$ to transport $\mathbf{a}$ competely to $\mathbf{b}_2$. Here we use the north-west corner rule \cite{MAL-073} to construct such a plan $\mathbf{P}_2$.  If $\mathbf{P}_2$ completely transports $\mathbf{a}$ to $\mathbf{b}_2$, then all the entries off the sub-matrix $\mathbf{Q} = \mathbf{P}_2(401:600,701:900) \in \mathbb{R}^{200\times 200}$ must be 0 due to the compact support of $\mathbf{a}$ and $\mathbf{b}_2$. Besides, since the entries of $\mathbf{C}$ are $\infty$ when $|\mathbf{y}_i-\mathbf{y}_j| > 0.3$, the strict upper triangular entries of $\mathbf{Q}$ must also be 0.  Now we apply the north-west corner rule  to determine the lower triangular (including diagonal) entries of $\mathbf{Q}$. More precisely, the rule starts by giving the highest possible value to $Q_{11} = (\mathbf{P}_2)_{401,701}$ by setting it to $\min\{(\mathbf{a})_{401},(\mathbf{b}_2)_{701}\}$. At each step, the entry $(\mathbf{P}_2)_{ij}$ is chosen to saturate either the $i$-th row constraint, $j$-th column constraint, or both if possible. The indices $i,j$ are then updated as follows: $i$ is incremented in the first case, $j$ is in the second, and both $i$ and $j$ are in the third case. The rule proceeds until $(\mathbf{P}_2)_{600,900}$ receives a value. On the other hand, using the second half of the cumulative sum inequality (\ref{eqn:cumsum_condition}), we have that each diagonal entry $Q_{kk}$ of $\mathbf{Q}$ must be chosen to saturate the corresponding row constraint or both the row and column constraints, but cannot saturate only the corresponding column constraint.  Since the total mass of $\mathbf{a}$ is equal to that of $\mathbf{b}_2$, the possible excessive mount of mass due to the row saturation is eventually transported by the last row of $\mathbf{Q}$. Hence mass is completely transported. 

Secondly, we show that if the cumulative sum inequality (\ref{eqn:cumsum_condition}) is violated, then no plan $\mathbf{P}_2$ can completely transport $\mathbf{a}$ to $\mathbf{b}_2$. Let $k$ be the smallest integer to break (\ref{eqn:cumsum_condition}),
\begin{align}\label{eqn:cumsum_violation}
\sum_{j=1}^{k} a(y_j) > \sum_{j=1}^{k} b_2(y_j).
\end{align}
Assume there exists a plan $\mathbf{P}_2$ which transports $\mathbf{a}$ completely to $\mathbf{b}_2$. We still denote $\mathbf{Q} = \mathbf{P}_2(401:600,701:900)$. Due to the compact support of $\mathbf{a},\mathbf{b}_2$ and the $\infty$ entries in $\mathbf{C}$, the nonzero entries of $\mathbf{P}_2$ can only lie in the lower triangular half (including the diagonal) of $\mathbf{Q}$. Consider the sub-matrix $\mathbf{Q}_k = \mathbf{Q}(1:k,1:k)$. On one hand, since $\mathbf{a}$ is completely transported, we have 
\[
\mathbf{Q}_k\mathbbm{1} = [ \mathbf{a}(y_1), \cdots, \mathbf{a}(y_k)]^T, \quad 
\mathbf{Q}_k^T\mathbbm{1} \le [ \mathbf{b}_2(y_1), \cdots, \mathbf{b}_2(y_k)]^T.
\]
However, $\sum_{i}(\mathbf{Q}_k\mathbbm{1})_i = \sum_{i,j} (\mathbf{Q}_k)_{ij}= \sum_j (\mathbf{Q}_k^T\mathbbm{1})_j$ leads to $\sum_{j=1}^{k} a(y_j) \le \sum_{j=1}^{k} b_2(y_j)$, causing a contradiction to inequality (\ref{eqn:cumsum_violation}).
\end{proof}

Then we have the following result regarding the sOT barycenter of $\mathbf{b}_1$ and $\mathbf{b}_2$.

\begin{theorem}\label{theorem:mirrority}
Let $\mathbf{b}_1$ and $\mathbf{b}_2$ be defined as in (\ref{eqn:b1b2}) with $b_1$ and $b_2$ satisfying (\ref{eqn:cumsum_inequality_b1b2}). Given cost matrix $\mathbf{C}$ defined in (\ref{eqn:CostMatrix0}), we have
\begin{align}
\lim_{(\lambda_1,\lambda_2)\rightarrow(1,0)} \underset{\mathbf{a}}{\mathrm{argmin}} \Big( \sum_{j=1}^2 \lambda_j L_{\emph{sOT}}(\mathbf{a},\mathbf{b}_j ; \mathbf{C})\Big) = (T^{0.4}b_2)|_{\mathbf{y}},\label{eqn:mirrority1}
\\
\lim_{(\lambda_1,\lambda_2)\rightarrow(0,1)} \underset{\mathbf{a}}{\mathrm{argmin}} \Big( \sum_{j=1}^2 \lambda_j L_{\emph{sOT}}(\mathbf{a},\mathbf{b}_j ; \mathbf{C}) \Big) = (T^{0.4}b_1)|_{\mathbf{y}}. \label{eqn:mirrority2}
\end{align}
\end{theorem}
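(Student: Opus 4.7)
The plan is to reduce the barycenter problem to a pointwise minimization over quantile functions and then read off the two limits from explicit formulas. First, since $b_1 \le_{\text{C}} b_2$ by (\ref{eqn:cumsum_inequality_b1b2}), Lemma \ref{lemma:character_a} supplies (for instance, with $a = b_2$) a source producing $\boldsymbol\upnu = \mathbf{0}$ that makes full transport feasible; hence the minimum of $\|\boldsymbol\upnu\|_1$ over $\mathcal{F}$ is zero and every optimizer of (\ref{eqn:UUOTBarycenter01}) has the form $\mathbf{a} = (T^{0.4}a(x))|_{\mathbf{y}}$ with $a$ a nonnegative periodic function on $[0,1)$ supported in $[0,0.2]$ that satisfies $b_1 \le_{\text{C}} a \le_{\text{C}} b_2$. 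This reduces (\ref{eqn:UUOTBarycenter01}) to minimizing $\sum_{j=1}^{2} \lambda_j L_{\text{sOT}}(\mathbf{a},\mathbf{b}_j;\mathbf{C})$ over such $a$.

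Next I would give a closed form for each $L_{\text{sOT}}(\mathbf{a},\mathbf{b}_j;\mathbf{C})$. From the cost matrix (\ref{eqn:CostMatrix0}), a coupling of $0.4+s\in\text{supp}(\mathbf{a})$ and $0.1+t\in\text{supp}(\mathbf{b}_1)$ has finite cost only when $s\le t$, and the inequality $b_1 \le_{\text{C}} a$ ensures that the monotone (north-west corner) rearrangement from the proof of Lemma \ref{lemma:character_a} respects this bound. Since one-dimensional OT with convex cost is minimized by the monotone rearrangement, the cost takes the quantile form
\begin{equation*}
L_{\text{sOT}}(\mathbf{a},\mathbf{b}_1;\mathbf{C}) = \sum_{k} \Delta m_k \bigl(0.3 + F_a^{-1}(u_k) - F_{b_1}^{-1}(u_k)\bigr)^2,
\end{equation*}
where $\{u_k\}$ is the joint refinement of the cumulative-mass levels of $a$ and $b_1$ with increments $\Delta m_k$, and $F_a,F_{b_1}$ are the corresponding discrete cumulative distribution functions on $\mathbf{y}\cap[0,0.2]$. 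An identical argument, with $s\ge t$ enforced now by $a \le_{\text{C}} b_2$, yields
\begin{equation*}
L_{\text{sOT}}(\mathbf{a},\mathbf{b}_2;\mathbf{C}) = \sum_{k} \Delta m_k \bigl(0.3 + F_{b_2}^{-1}(u_k) - F_a^{-1}(u_k)\bigr)^2.
\end{equation*}

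Third, the weighted sum is then a sum of pointwise-in-$u$ convex quadratics in $\phi(u):=F_a^{-1}(u)$, subject only to the pointwise inequality $F_{b_2}^{-1}(u)\le \phi(u)\le F_{b_1}^{-1}(u)$ extracted from $b_1 \le_{\text{C}} a \le_{\text{C}} b_2$. Setting the derivative in $\phi$ to zero gives the unconstrained pointwise minimizer $\phi^{\star}_{\lambda}(u)= \lambda_1 F_{b_1}^{-1}(u) + \lambda_2 F_{b_2}^{-1}(u) + 0.3(\lambda_2-\lambda_1)$. As $(\lambda_1,\lambda_2)\to(1,0)$ one has $\phi^{\star}_{\lambda}(u)\to F_{b_1}^{-1}(u)-0.3 \le -0.1 < 0\le F_{b_2}^{-1}(u)$, so the constrained minimum is attained at the lower endpoint $F_{b_2}^{-1}(u)$; that is $F_a = F_{b_2}$, hence $a = b_2$ and $\mathbf{a} = (T^{0.4}b_2)|_{\mathbf{y}}$, giving (\ref{eqn:mirrority1}). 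The other limit (\ref{eqn:mirrority2}) follows symmetrically from $\phi^{\star}_{\lambda}(u)\to F_{b_2}^{-1}(u)+0.3 \ge 0.3 > 0.2 \ge F_{b_1}^{-1}(u)$.

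The main obstacle will be justifying the monotone-rearrangement formula rigorously on the discrete mesh: the north-west corner construction in Lemma \ref{lemma:character_a} produces a feasible sorted-order plan, and one must verify it is globally optimal among all finite-cost couplings, which is essentially the standard fact that convex-cost one-dimensional OT is attained by the monotone rearrangement (subject to the admissibility inequality already guaranteed). A secondary technical point is continuity in $\lambda$ of the constrained minimizer; this follows because $\phi^{\star}_{\lambda}$ is linear in $\lambda$ and projection onto $[F_{b_2}^{-1}(u),F_{b_1}^{-1}(u)]$ is continuous, so the argmin of the constrained problem converges to the claimed boundary-of-the-constraint limit even though the unconstrained minimizer leaves the feasible interval entirely in the limit.
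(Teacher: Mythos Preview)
Your proposal is correct and reaches the same conclusion as the paper, but by a genuinely different route. The paper argues directly at the limit: once $\lambda_1\to 0$, it minimizes $L_{\text{sOT}}(\mathbf{a},\mathbf{b}_2)$ over the admissible set $\mathcal{S}_{\mathbf{a}}$ by a local exchange argument---if $a>_{\text{C}} b_1$ at some first index $k$, one finds $l>k$ with $a(y_l)<b_1(y_l)$ and shifts a small amount of mass in $\mathbf{a}$ from position $k$ to position $l$, which moves that mass closer to the support of $\mathbf{b}_2$ and strictly lowers the cost; hence the minimizer must satisfy $a=_{\text{C}} b_1$, i.e., $a=b_1$. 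Your approach instead invokes the one-dimensional quantile representation of optimal transport with convex cost, rewrites both costs as integrals of $(0.3\pm(F_a^{-1}-F_{b_j}^{-1}))^2$, and then minimizes the weighted sum pointwise in the quantile variable subject to $F_{b_2}^{-1}\le F_a^{-1}\le F_{b_1}^{-1}$. This buys you strictly more: you obtain the explicit unconstrained minimizer $\phi^\star_\lambda=\lambda_1 F_{b_1}^{-1}+\lambda_2 F_{b_2}^{-1}+0.3(\lambda_2-\lambda_1)$ for every $\lambda$, from which the two limits drop out by projection, and the continuity-in-$\lambda$ issue is handled cleanly. The paper's argument, by contrast, is more elementary in that it never needs the monotone-rearrangement optimality statement, only a single cost-decreasing swap. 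The two technical points you flag---optimality of the north-west corner plan among finite-cost couplings, and that the pointwise-projected minimizer is a bona fide quantile function on the mesh---are both benign here: the former follows because the monotone plan is optimal among \emph{all} couplings for quadratic cost (and happens to be feasible by Lemma~\ref{lemma:character_a}), and the latter is automatic in the limit since the minimizer coincides with $F_{b_1}^{-1}$ or $F_{b_2}^{-1}$.
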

\begin{proof}
We only prove the case for $(\lambda_1,\lambda_2)\rightarrow(0,1)$.The other case is similar. By the virtue of lemma \ref{lemma:UUOTbarycenter} and lemma \ref{lemma:character_a}, all the candidates for the sOT barycenter of $\mathbf{b}_1, \mathbf{b}_2$ are in the form of (\ref{eqn:a_character}) with condition (\ref{eqn:cumsum_condition}) satisfied. We denote by $\mathcal{S}_{\mathbf{a}}$ the collection of all these candidates.

In the limit $(\lambda_1,\lambda_2)\rightarrow(0,1)$, $\lambda_1 L_{\text{sOT}}(\mathbf{a},\mathbf{b}_1)$ becomes zero. We only need to seek an optimal $\mathbf{a}^*$ in $\mathcal{S}_{\mathbf{a}}$ to minimize $L_{\text{sOT}}(\mathbf{a},\mathbf{b}_2)$. Pick any $\mathbf{a} \ge_{\text{C}} \mathbf{b}_1$, and let $k$ be the smallest integer such that 
\begin{align}\label{eqn:cumsum_violation2}
\sum_{j=1}^{k} a(y_j) > \sum_{j=1}^{k} b_1(y_j).
\end{align}
Since $\sum_{j=1}^{n} a(y_j) = \sum_{j=1}^{n} b_1(y_j)$, there must be some integer $l>k$ such that $a(y_l)< b_1(y_l)$. Let $\mathbf{P}_{\mathbf{a}}^*$ be an optimal transport plan for $L_{\text{sOT}}(\mathbf{a},\mathbf{b}_2)$, and $(\mathbf{P}_{\mathbf{a}}^*)_{ki}$ be a nonzero entry on the $k$-th row. Then for a sufficiently small $\epsilon>0$, changing $(\mathbf{P}_{\mathbf{a}}^*)_{ki} \rightarrow (\mathbf{P}_{\mathbf{a}}^*)_{ki} - \epsilon$ and $(\mathbf{P}_{\mathbf{a}}^*)_{kl} \rightarrow (\mathbf{P}_{\mathbf{a}}^*)_{kl} + \epsilon$ causes a cost reduction. Therefore $\mathbf{a}$ is not the optimal candidate in $\mathcal{S}_{\mathbf{a}}$ unless $\mathbf{a} =_{\text{C}} \mathbf{b}_1$, that is, $\mathbf{a} = \mathbf{b}_1$.
\end{proof}

\begin{remark}
In the standard OT barycenter problem, we have
\begin{align}
\lim_{(\lambda_1,\lambda_2)\rightarrow(1,0)} \underset{\mathbf{a}}{\mathrm{argmin}} \Big( \sum_{j=1}^2 \lambda_j L_{\emph{OT}}(\mathbf{a},\mathbf{b}_j ; \mathbf{C}) \Big) = \mathbf{b}_1,
\quad
\lim_{(\lambda_1,\lambda_2)\rightarrow(0,1)} \underset{\mathbf{a}}{\mathrm{argmin}} \Big( \sum_{j=1}^2 \lambda_j L_{\emph{OT}}(\mathbf{a},\mathbf{b}_j ; \mathbf{C}) \Big) = \mathbf{b}_2.
\end{align}
However, the sOT barycenter problem, without considering the periodic translation of $\mathbf{b}_1$ and $\mathbf{b}_2$, gives exactly opposite results as shown in theorem \ref{theorem:mirrority}. We call the limits (\ref{eqn:mirrority1}) and (\ref{eqn:mirrority2}) a reverse and portion selection mechanism.
\end{remark}

\section{Numerical Results}

In this section, we will present several numerical experiments to  validate the proposed sOT problem.  Note that in Lemmas \ref{lemma:polytope} and \ref{lemma:equivalence}, we only prove the existence of $\lambda$ for the equivalence between the double minimization formulation (\ref{eqn:UUOT_formI}) and the single minimization formulation (\ref{eqn:UUOT_formII}), but there is no explicit evaluation of $\gamma$. For the numerical simulations, we will decide the value of $\gamma$ by the following procedure: taking several values of $\gamma = \gamma_1, \gamma_2, \cdots$ in ascending order and running the Dykstra solver for each value $\gamma_i$, until the total transported mass $\langle \mathbf{P}, \mathbbm{1} \rangle$ become unchanged (within certain accuracy) at some $\gamma_j$, then we choose $\gamma_j$ to be the value of $\gamma$ in the simulations. 

Figure \ref{fig:test_gamma} shows an example for this procedure. In this test, we take $\mathbf{C}$ as the truncated $L^2$ distance defined in (\ref{eqn:CostMatrix}) with $C_{cut} = 0.5$. The two marginal densities $\mathbf{a}$ and $\mathbf{b}$ are given as follows,
\begin{align}\label{eqn:Gaussian_equal}
\textbf{a} = \frac{1}{D}\left(e^{-\frac{(x-0.2)^2}{0.1^2}}+0.001\right)\bigg|_{h\mathbb{Z}\cap[0,1]}, \quad \textbf{b} = \frac{1}{D}\left(e^{-\frac{(x-0.8)^2}{0.1^2}}+0.001\right)\bigg|_{h\mathbb{Z}\cap[0,1]},
\end{align}
with $h = \frac{1}{200}$. We take $\gamma = 0, 0.1, 0.2,0.5,100$. For each value of $\gamma$, we calculate $\langle \mathbf{P}, \mathbbm{1} \rangle$. We note that when $\gamma = 0.5$ or larger, $\langle \mathbf{P}, \mathbbm{1} \rangle$ remains unchanged. Thereby we take $\gamma = 0.5$.

\begin{figure}[!htbp]
\centerline{
\includegraphics[width=60mm]{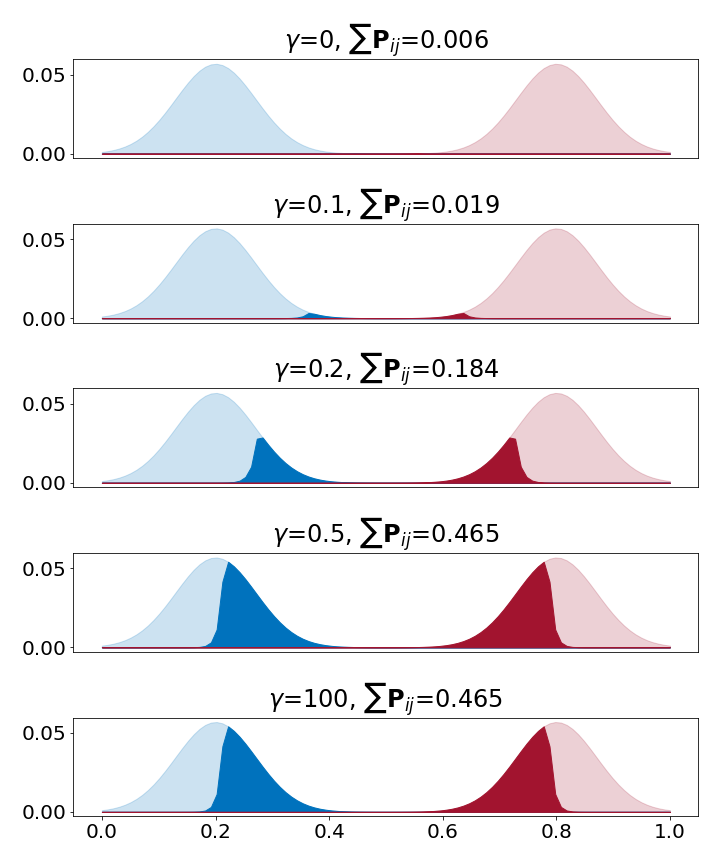}
 }
\caption{The schematic for taking the value of $\gamma$. We take several values of $\gamma$ in ascending order, run the Dykstra's algorithm, and evaluate the total transported mass $\langle \mathbf{P}, \mathbbm{1} \rangle$ until it becomes unchanged within certain accuracy. In this simulation, we take $\epsilon = 0.01$. Note that when $\gamma = 0$, there is still a tiny mass transported, which is due to the approximation of entry regularization.}
\label{fig:test_gamma}
\end{figure}

\subsection{Effect of $\epsilon$ (regulator weight) for entropy regularized sOT}

In our first example, the two marginal densities are taken as 1D discretized Gaussian distribution: 
\[
\textbf{a} = \frac{1}{2D}\left(e^{-\frac{(x-0.2)^2}{0.1^2}}+0.001\right)\bigg|_{h\mathbb{Z}\cap[0,1]}, \quad \textbf{b} = \frac{1}{D}\left(e^{-\frac{(x-0.8)^2}{0.1^2}}+0.001\right)\bigg|_{h\mathbb{Z}\cap[0,1]}, 
\]
with $h = \frac{1}{200}$. $D$ is a normalization constant such that $\|\mathbf{a} \|_1= \frac{1}{2}$ and $\|\mathbf{b}\|_1 = 1$. The cost matrix $\textbf{C} = (C_{ij})\in\mathbb{R}^{200\times 200}$ is taken as 
\begin{align}\label{eqn:CostMatrix}
C_{ij} =
\begin{cases}
|x_i - x_j|^2, \quad &\text{if} \ |x_i - x_j| \le C_\text{cut}, \\
\infty, \quad &\text{if} \ |x_i - x_j| > C_\text{cut}.
\end{cases}
\end{align}
with $\text{cutoff} = 0.7$. The value of $\gamma$ is taken to be $\gamma = 2$. By taking various values of $\epsilon = 1, 0.5, 0.1, 0.05, 0.025$ in Figure \ref{fig:test_eps}, we test the generalized Sinkhorn algorithm with $h_1$ and $h_2$ taken in the form of (\ref{eqn:prox_OptionI}). For all values of $\epsilon$, $\textbf{a}$ is completely transported. For the transported part of $\textbf{b}$, namely, the row sum $\mathbf{P}^T\mathbbm{1}$ of the optimal plan $\mathbf{P}$, it spreads widely over the support of $\textbf{b}$, with more mass in the region closer to the support of $\textbf{a}$. As $\epsilon$ becomes smaller, more amount of mass is moved to the left half of the support of $\textbf{b}$.

\begin{figure}[!htbp]
\centerline{
\includegraphics[width=80mm]{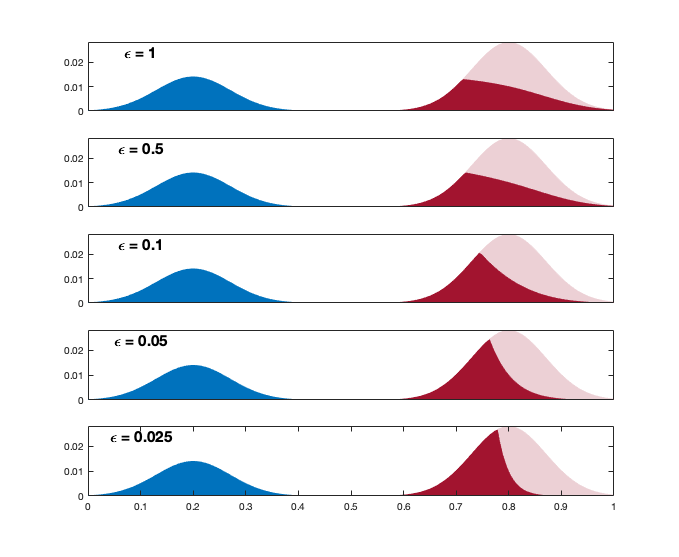}
 }
\caption{The sOT solutions for various $\epsilon$ as $\epsilon \rightarrow 0$. The blue region indicates the transported part of $\textbf{a}$. The dark red region indicates the transported part of $\textbf{b}$, and the light red region corresponds the blocked part of $\textbf{b}$.}
\label{fig:test_eps}
\end{figure}

\subsection{Effect of $C_{\text{cut}}$ for entropy regularized sOT}

In the second example, we test the effect of $\infty$ entries in the cost matrix $\textbf{C}$. In particular, we construct $\textbf{C}$ as in (\ref{eqn:CostMatrix}) but take various cutoff values $C_\text{cut} = 0.25,0.35,0.50,0.55,10$. The marginal densities $\textbf{a}$ and $\textbf{b}$ are 1D Gaussian distributions in (\ref{eqn:Gaussian_equal}).
Note that in this example, we take $\| \textbf{a} \|_1 = \| \textbf{b} \|_1$ such that when there is no $\infty$ entry in $\textbf{C}$, the sOT problem reduces to the standard balanced OT problem. We fix $\epsilon = 0.05$ and $\gamma = 2$ in this example.

\begin{figure}[!htbp]
\centerline{
\includegraphics[width=80mm]{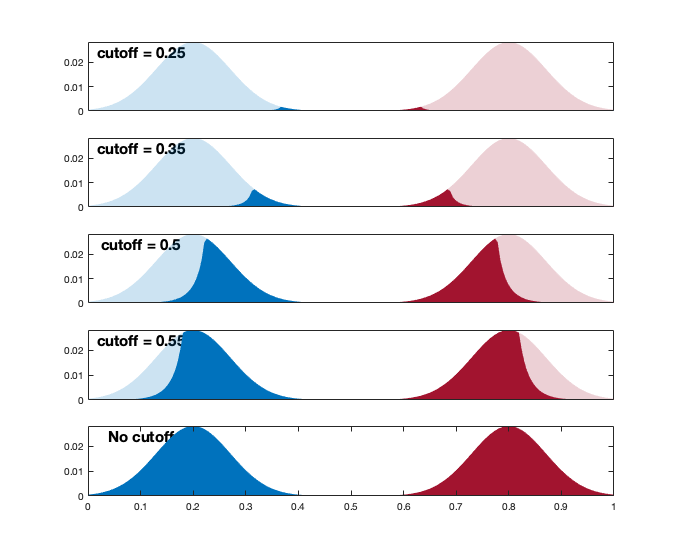}
 }
\caption{The sOT solutions for various cutoff values. The dark blue region indicates the transported part of $\textbf{a}$, and the light blue is for the blocked part of $\textbf{a}$. The dark red region indicates the transported part of $\textbf{b}$, and the light red region corresponds the blocked part of $\textbf{b}$.}
\label{fig:test_cutoff}
\end{figure}
Figure \ref{fig:test_cutoff} presents the solutions of sOT problem with different values of cutoff parameter. When $C_{\text{cut}}$ = 0.25 is small, majority of the mass are blocked, only a tiny amount on the supports of $\textbf{a}$ and $\textbf{b}$ within the separation of 0.25 is allowed to transport. When the $C_{\text{cut}}$ becomes larger (fewer $\infty$ entries in $\textbf{C}$), more amount of mass is transported from the right corner of the support of $\textbf{a}$ to the left corner of the support of $\textbf{b}$. When cutoff becomes large enough, for instance $C_{\text{cut}} = 10$, all entries of $\textbf{C}$ become finite, and the sOT problem degenerates to the standard balanced OT problem, in which all mass in $\textbf{a}$ is completely transported to $\textbf{b}$.

\begin{figure}[!htbp]
\centerline{
\includegraphics[width=55mm]{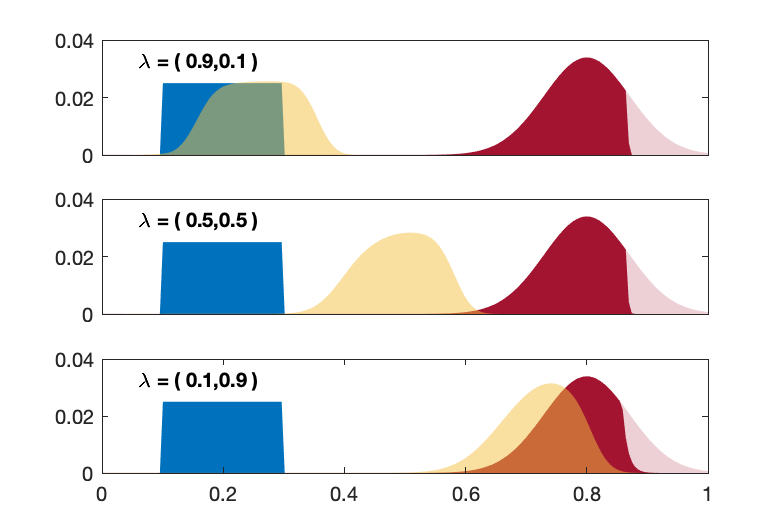}
\includegraphics[width=55mm]{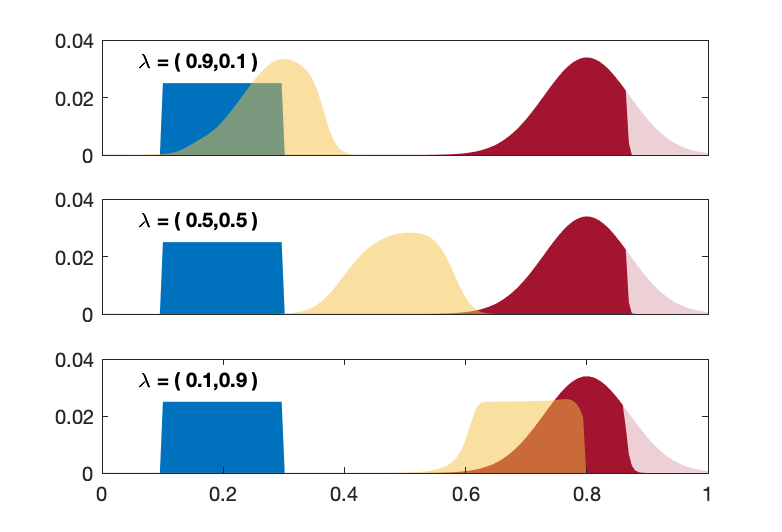}
\includegraphics[width=55mm]{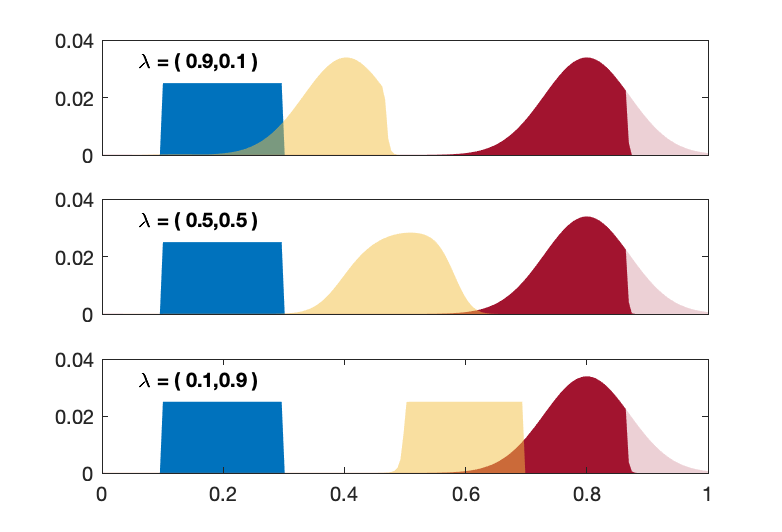} \\
 }
\centerline{
\includegraphics[width=55mm]{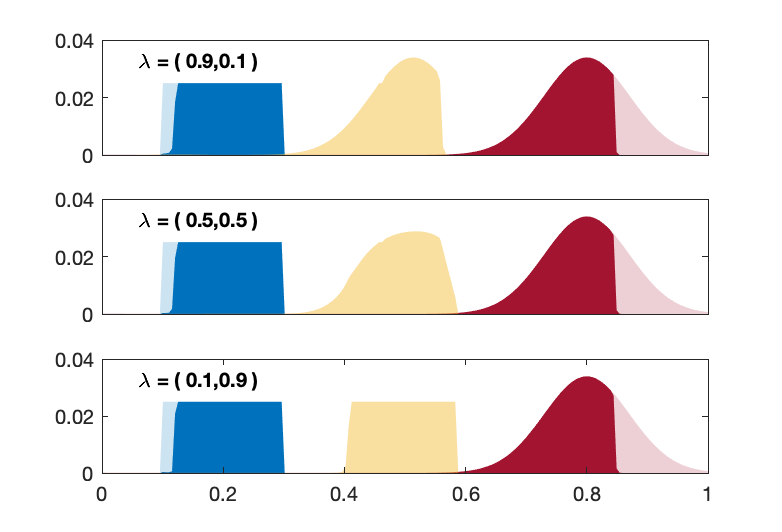}
\includegraphics[width=55mm]{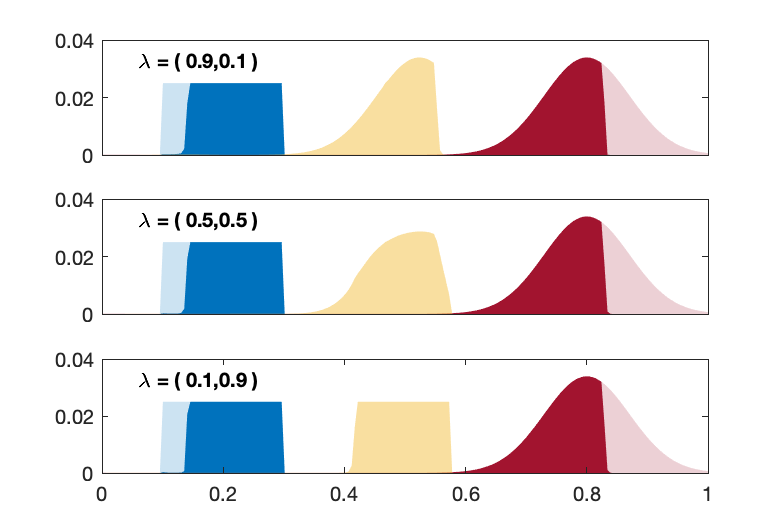}
\includegraphics[width=55mm]{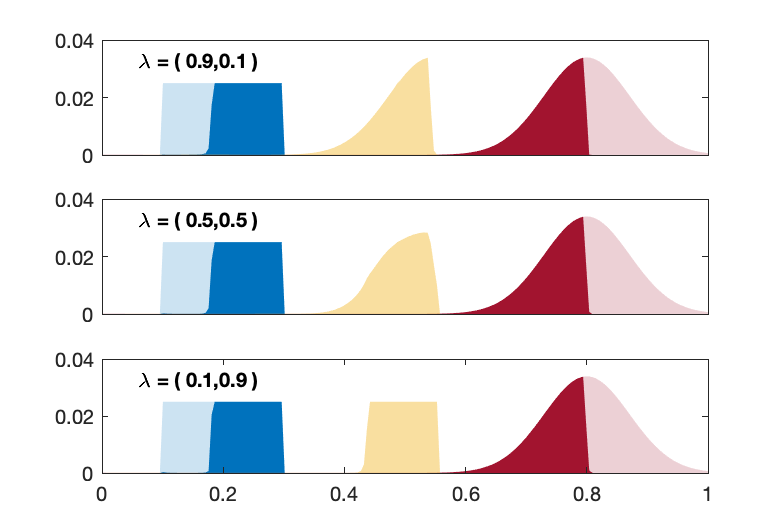} \\
 }
 \centerline{
 \includegraphics[width=55mm]{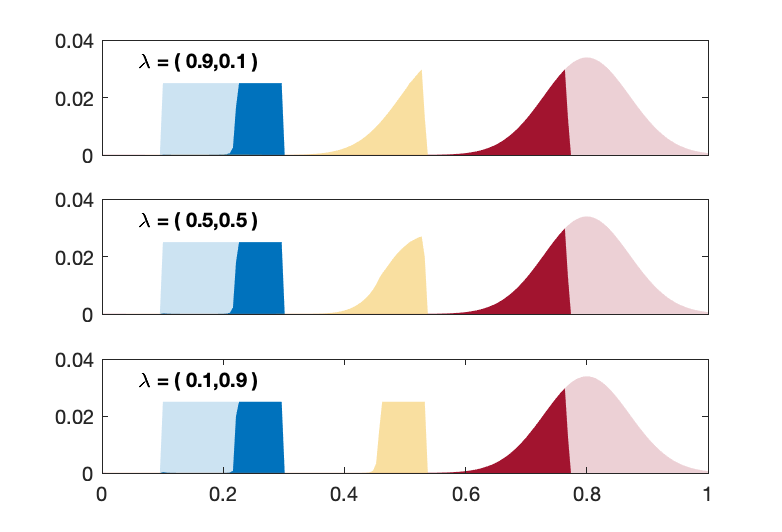}
\includegraphics[width=55mm]{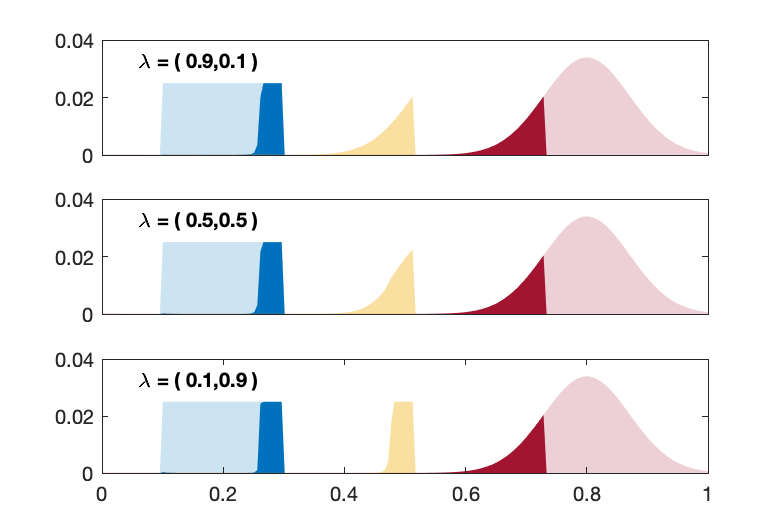}
\includegraphics[width=55mm]{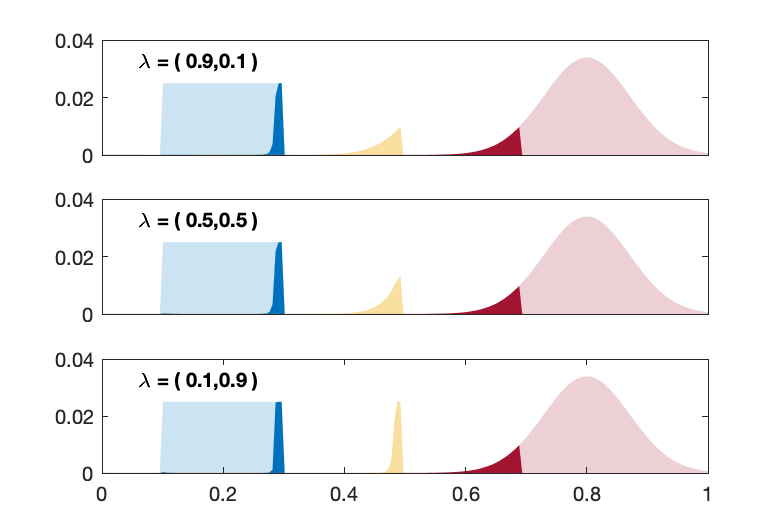} \\
 }
\caption{The weighted sOT barycentric solutions for $(\mathbf{b}_1, \mathbf{b}_2)$ for various values of $C_{\text{cut}}$ and weights $(\lambda_1, \lambda_2)$. From top left, top middle, top right, until bottom right, with $C_\text{cut} =$ 1.00, 0.50, 0.40, 0.30, 0.28, 0.26, 0.24, 0.22, 0.20, respectively, each subfigure consists of three cases  with $(\lambda_1,\lambda_2) = (0.9,0.1), (0.5,0.5), (0.1,0.9)$.}
\label{fig:barycenter_01}
\end{figure}

\begin{figure}[!htbp]
\centerline{
\includegraphics[width=150mm]{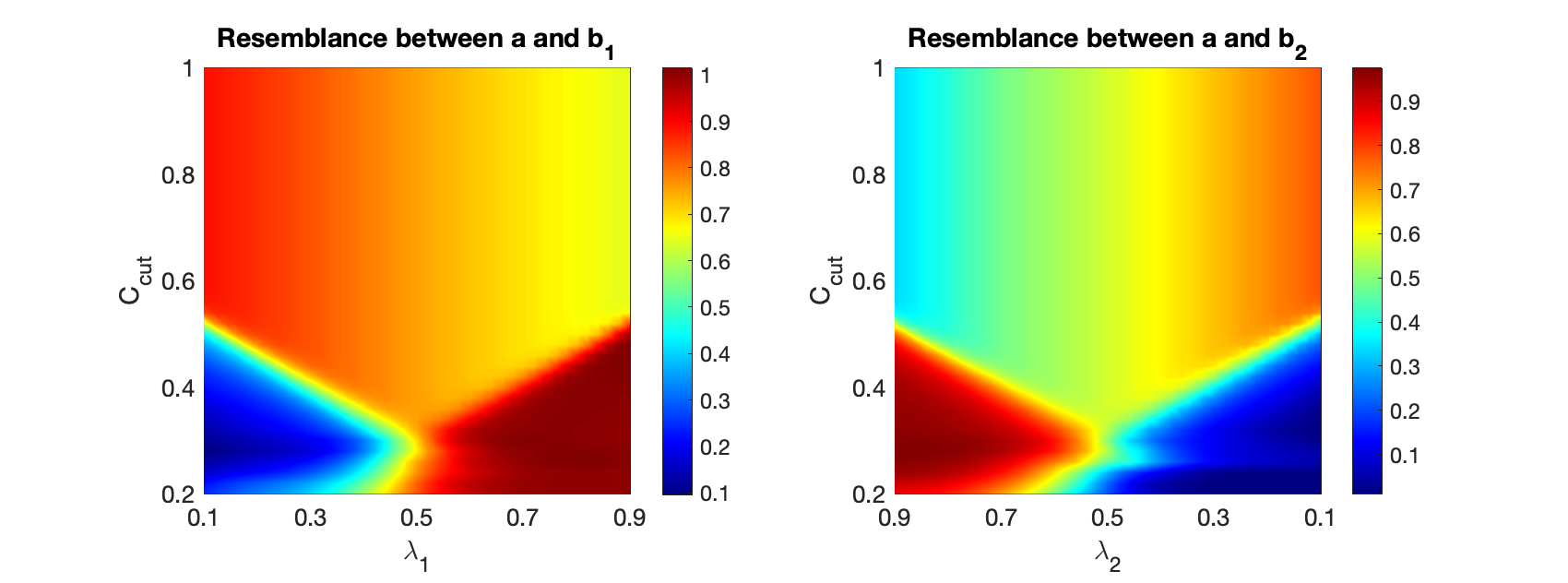}
 }
\caption{The resemblance between the sOT barycenter $\mathbf{a}$ and the marginal distributions $\mathbf{b}_1, \mathbf{b}_2$. When $C_{\text{cut}}$ is small, it shows the reverse and portion selection mechanism as indicated by the limits in Theorem \ref{theorem:mirrority}. When $C_{\text{cut}}$ increases, the mechanism gradually changes back to normal as the standard OT barycenter problem.}
\label{fig:barycenter_02}
\end{figure}

\subsection{The effect of $C_{\emph{cut}}$ on weighted sOT barycenter problem}

In this example, two unequal densities $\mathbf{b}_1,\mathbf{b}_2 \in \mathbb{R}^{200}$ are taken as the discretization of: 
\[
\mathbf{b}_1 = \frac{1}{D_1}\left( \chi_{[0.1,0.3]} +0.001\right)\bigg|_{h\mathbb{Z}\cap[0,1]}, \quad \mathbf{b}_2 = \frac{1}{D_2}\left(e^{-\frac{(x-0.8)^2}{0.1^2}}+0.001\right)\bigg|_{h\mathbb{Z}\cap[0,1]}
\]
over a uniform mesh $\{ x_j = jh \}_{j=0}^{200}$ with $h = \frac{1}{200}$. The constants $D_1, D_2$ are taken such that $\|\mathbf{b}_1\|_1 = 1.0$ and $\|\mathbf{b}_2\|_1 = 1.2$. The cost matrix $\textbf{C} = (C_{ij})\in\mathbb{R}^{200\times 200}$ is taken as in (\ref{eqn:CostMatrix}) with various cutoff values. The value of $\gamma$ is taken to be $\gamma=2$.  For each cutoff value, three pairs of weights $(\lambda_1,\lambda_2) = (0.9,0.1), (0.5, 0.5), (0.1, 0.9)$ are considered. Figure \ref{fig:barycenter_01} depicts the numerical simulations for $C_{\text{cut}} =$ 1.00, 0.50, 0.40, 0.30, 0.28, 0.26, 0.24, 0.22, 0.20, from top left, top middle, top right, until bottom right. In each plot, the opaque light blue (and red, respectively) represents the density $\mathbf{b}_1$ (and $\mathbf{b}_2$, respectively), the transparent dark blue (and red, respectively) represents the transported mass of $\mathbf{b}_1$ (and $\mathbf{b}_2$, respectively), and the transparent yellow is the weighted sOT barycenter.

For a large $C_{\text{cut}} = 1.00$ in the top left subfigure, when $\lambda_1$ is close to 1, the sOT barycenter $\mathbf{a}$ is close to $\mathbf{b}_1$. However $\mathbf{b}_2$ is only partially transported as the mass of $\mathbf{b}_2$ is more than that of $\mathbf{b}_1$. When $\lambda_1$ approaches 0, the sOT barycenter becomes close to partially transported $\mathbf{b}_2$. This is similar to the standard barycentric problem, except that each density $\mathbf{b}_j$ may be only partially transported.

For a small $C_{\text{cut}} = 0.50$ in top middle subfigure, when $\lambda_1$ is close to 1, the sOT barycenter $\mathbf{a}$, compactly supported near the compact domain of $\mathbf{b}_1$, is surprisingly similar to $\mathbf{b}_2$ (up to a translational shift). On the contrary when $\lambda_1$ is close to 0,  the sOT barycenter $\mathbf{a}$, compactly supported near the compact domain of $\mathbf{b}_2$, resembles $\mathbf{b}_1$, up to a translational shift.

When taking further small value $C_{\text{cut}} = 0.24$ as in bottom left subfigure, each density $\mathbf{b}_j$ is only allowed to transport within the distance = 0.24. Hence for any pair $(\lambda_1,\lambda_2)$, the sOT barycenter $\mathbf{a}$ can only be compactly supported in between $\mathbf{b}_1$ and $\mathbf{b}_2$. Besides, without considering the translational shift, $\mathbf{a}$ is valued closely to the transported $\mathbf{b}_2$ when $\lambda_1\rightarrow 1$, while $\mathbf{a}$ is valued closely to $\mathbf{b}_1$ when $\lambda_1\rightarrow 0$. The sOT barycenter solution in bottom right subfigure for $C_{\text{cut}} = 0.20$ is similar to that for $C_{\text{cut}} = 0.24$, except that more mass is blocked for $\mathbf{b}_1$ and $\mathbf{b}_2$.

In Figure \ref{fig:barycenter_02}, we further present the resemblance phase diagram between the sOT barycenter $\mathbf{a}$ and the marginal distribution $\mathbf{b}_1$ on the $\lambda_1$-$C_{\text{cut}}$ plane (the left subfigure), and similarly that between $\mathbf{a}$ and $\mathbf{b}_2$ on the $\lambda_2$-$C_{\text{cut}}$ plane (the right subfigure). Here the resemblance between $\mathbf{a}$ and $\mathbf{b}_j$ is defined as
\[
\text{Resem}(\mathbf{a},\mathbf{b}_j) : = \frac{\min_{0\le k \le 200} \|\mathbf{b}_j - \texttt{circshift}(\mathbf{a},k)\|_2}{\min_{0\le k \le 200} \|\mathbf{b}_1 - \texttt{circshift}(\mathbf{b}_2,k)\|_2}, \quad j=1,2,
\]
in which \texttt{circshift} is a circular shift operator. The smaller the resemblance value is, the more resemblant the two densities are. If $\text{Resem}(\mathbf{x},\mathbf{y}) = 0$, then $\mathbf{x}=\mathbf{y}$ up to a translational shift. It is evident that when $C_{\text{cut}}$ is small, the resemblance of the sOT barycenter $\mathbf{a}$ to $\mathbf{b}_1$ (and $\mathbf{b}_2$, respectively) as $(\lambda_1,\lambda_2)\rightarrow (1,0)$ (and $(\lambda_1,\lambda_2)\rightarrow(0,1)$, respectively) is reversed, compared to the standard OT barycenter problem in which $\mathbf{a}$ resembles $\mathbf{b}_1$ when $\lambda_1 = 1$, and $\mathbf{a}$ resembles $\mathbf{b}_2$ when $\lambda_2 = 1$. On the other hand, as the value of $C_{\text{cut}}$ increases, the reverse effect is lessened. When $C_{\text{cut}}$ becomes sufficiently large such that $\mathbf{C}$ contains no $\infty$ entries, the sOT barycenter problem degenerates to the standard OT barycenter problem, and the reverse mechanism turns back to normal.

\subsection{Color transfer}
Finally, we apply sOT to an important class of image processing problem, the color transfer problem. Specifically, color transfer imposes the color of a target image to an input image so that the output image has the same pattern and geometry as the input image but with the color palette from the target image. This can be viewed as transferring the histogram of pixels in the 3D color space of an image to another \cite{pitie2005n} which optimal transport is powerful at. Direct application of conventional optimal transport causes issues, and several optimal transport based algorithms have been introduced to resolve these drawbacks. For example, adding regularization helps increase the robustness and eliminates outliers \cite{ferradans2014regularized,rabin2014adaptive}. Another issue is that transferring the entire color palette that is very different from the input image results in unrealistic looks. Fixing the amount of transferred mass a priori can help mitigate this issue but with the need of deciding a scale for each application case \cite{bonneel2019spot}. With sOT, we are able to directly control the similarity of transferred color by setting a distance threshold of transferred color in the color space. As a result, we control the color palette similarity of the output and the input image.

For an image, we represent the $n$ pixels as a point cloud $\mathbf{X}\in\mathbb{R}^{n\times 3}$ in the 3-dimensional color space (the RGB space). Given two images represented by $\mathbf{X}\in\mathbb{R}^{n\times 3}$ (input image) and $\mathbf{Y}\in\mathbb{R}^{m\times 3}$ (target image), the color transfer problem is formulated as coupling two uniform distributions $\mathbf{a}\in\mathbb{R}^{n}_+$ and $\mathbf{b}\in\mathbb{R}^m_+$ with the cost matrix $\mathbf{C}\in\mathbb{R}^{n\times m}_+$ where $C_{ij}=\|X_i-Y_j\|^2_2$. In sOT, we use a modified cost matrix $\bar{\mathbf{C}}$ such that $\bar{C}_{ij}=C_{ij}$ for $C_{ij}\leq C_{\text{cut}}$ and $\bar{C}_{ij}=\infty$ otherwise. When dealing with large images, a subsampling and upsampling is often implemented to improve efficiency \cite{ferradans2014regularized}. We first obtain subsampled images $\mathbf{X}^\text{s}$ and $\mathbf{Y}^\text{s}$ using the resize function from the PIL package \cite{umesh2012image} with the ANTIALIAS option. The optimal transport map $\mathbf{P}^{\text{s}}$ between the subsampled images is then determined by sOT algorithm. The output image $\mathbf{X}^{\text{out}}\in\mathbb{R}^{n\times 3}$ with the transferred color palette is constructed such that 

\begin{equation}\label{eqn:ColorTransfer}
\mathbf{X}^{\text{out}}_i = (\mathbf{P}^{\text{s}}\mathbf{Y}^{\text{s}})_{N(i)}/a^{\text{s}}_{N(i)} + \mathbf{X}^{\text{s}}_{N(i)}(1- \sum_j \mathbf{P}^{\text{s}}_{N(i),j}/a^{\text{s}}_{N(i)}) - \mathbf{X}^{\text{s}}_{N(i)} + \mathbf{X}_i,
\end{equation}
where $N(i)$ is the index of the pixel in the subsampled image $\mathbf{X}^{\text{s}}$ that is the closest (in color space) to pixel $i$ in the input image, and $a^{\text{s}}_{N(i)}$ is the source distribution of the subsampled image. The color difference between the input and output images is determined by the color difference due to color transfer in the subsampled input and output images. We take $\gamma = 2$ in this example.

\begin{figure}[!htbp]
\centerline{
\includegraphics[width=0.8\textwidth]{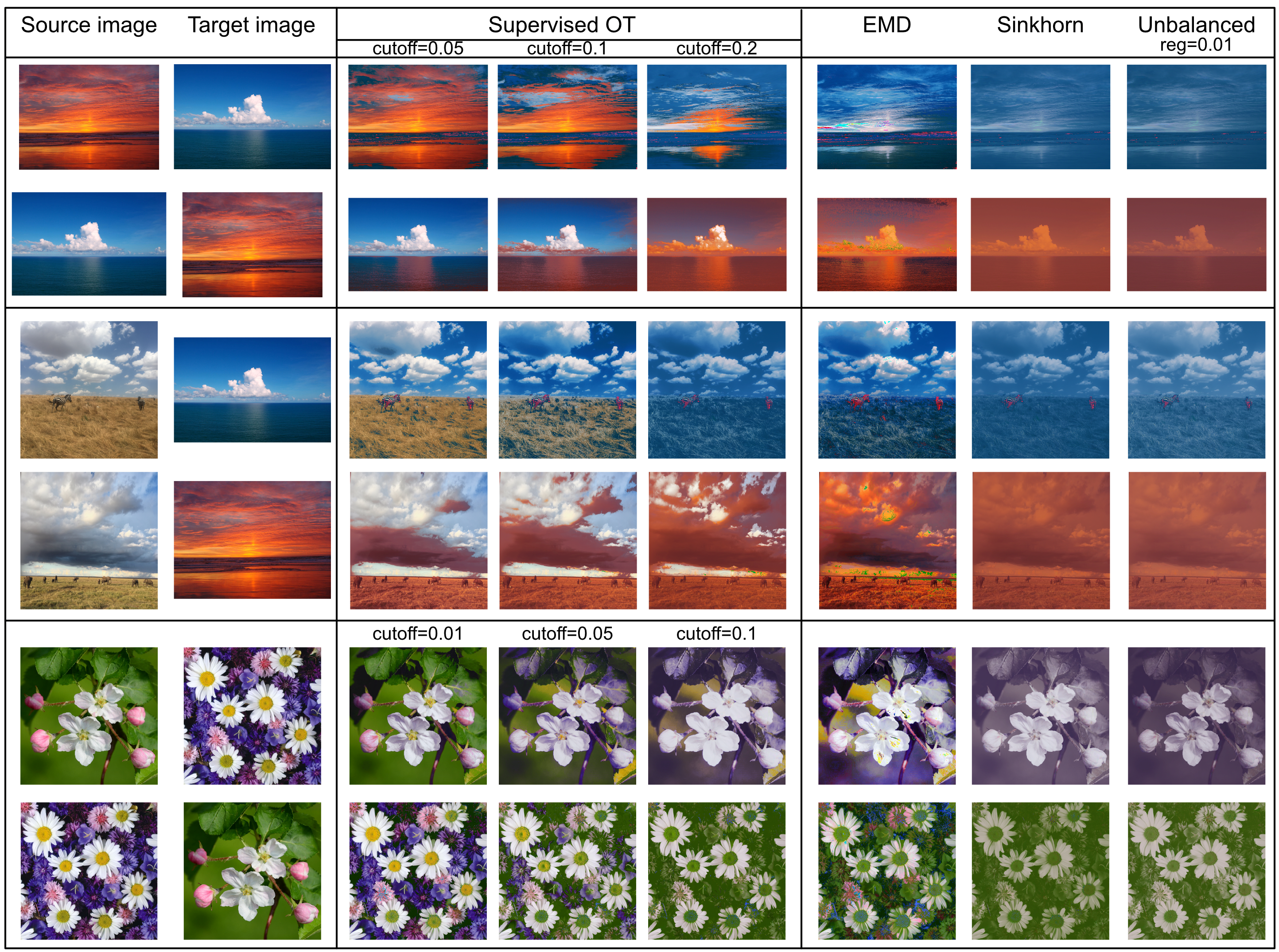}
 }
\caption{The color transfer problem where the color palette of the target image is to be transferred to the source image and the output image keeps the geometry of the source image. The results of supervised optimal transport with different cutoffs in the color space, earth mover's distance, entropy regularized optimal transport (Sinkhorn), and unbalanced optimal transport are shown.}
\label{fig:color_transfer_01}
\end{figure}

The numerical results demonstrate that when the color palettes of the input and target images are considerably distant, applying color transfer produces unrealistic output images (Figure \ref{fig:color_transfer_01}. In contrast, with the supervised cost matrix in sOT, the amount of transferred color can be controlled producing more realistic output images (Figure \ref{fig:color_transfer_01}). When the cutoff value in sOT equals infinity, the output image will converge to the result of regularized optimal transport or earth mover's distance depending on whether entropy regularization was used.

\section{Conclusion and Discussion}

In this work, we introduce the supervised optimal transport problem where the cost matrix $\mathbf{C}$ can have $\infty$ entries and the $\infty$-pattern of $\mathbf{C}$ supervises and controls the transport plans. Also, the source and target distributions need not to be normalized in sOT. These properties make sOT a method generally applicable to a large class of transportation problems where application-specific constraints are to be imposed on the transport plan and the original units are to be preserved for the original distributions. 
To apply sOT on large-scale real problems, we develop a fast numerical solver for sOT based on the Dykstra algorithm. We also extend the OT barycenter problem into a supervised one in the setup of sOT. By considering the sOT barycenter for two distributions $(\textbf{b}_1, \textbf{b}_2)$, a new reverse and portion selection mechanism is discovered, giving the barycenter $\mathbf{a}$ opposite to that of the standard OT barycenter problem when the weight approaches the boundary of the unit simplex. The properties and effects of different parameter values of sOT are illustrated numerically with toy examples. We also demonstrate the reverse behavior of sOT barycenter in an extensive numerical example. In an important problem in imaging science, we compare sOT to several other OT variants to demonstrate its unique utility of supervising the transport plan.

This work can be extended in several ways in the future. For example, a supervised Gromov-Wasserstein OT analogous to sOT can be developed and will enable the integration of multiple subsamples of the same system without known inter-sample correspondence.

In the current work, the formulations (\ref{eqn:UUOT_formII}) and (\ref{eqn:UUOT_formIII}) are obtained from the discrete OT setting. Introducing the entropy regularization terms in (\ref{eqn:KLUUOT_formII}), the optimal plan $\mathbf{P}_{\epsilon}$ has a diagonal rescaling form, such that Dykstra algorithm (Sinkhorn type) can be applied to  improve the numerical efficiency significantly. On the other hand, it is also of practical interest to find an efficient solver for the sOT problem without entropy regularizaion. Inspired by \cite{LiYinOsher_JSC2018}, we may link (\ref{eqn:UUOT_formII}) and (\ref{eqn:UUOT_formIII}) with the dynamical OT (Benamou-Breiner type \cite{BenamouBrenier_NM2018}) formulation. Discrete sOT formulations (\ref{eqn:UUOT_formII}) and (\ref{eqn:UUOT_formIII}) work for any ``ground metric" $\mathbf{C}$. On the other hand,  if the metric $\mathbf{C}$ is homogeneous of degree one such as $L^1$ metric $\mathbf{C} = (\|x_i-y_j\|_1)_{ij}$, the original OT problem can be reformulated as a minimal flux minimization problem \cite{LiRyuOsherYinGangbo_JSC2018}. Motivated by this, it is also interesting to consider, when taking some degree-one homogeneous metric $\mathbf{C}$ in (\ref{eqn:UUOT_formIII}) with certain $\infty$-pattern, whether there exists a minimal flux formulation for sOT problem. Then we can apply well-established efficient solvers for $L^1$ minimization problems to sOT provided that it can be reformulated as a minimal flux problem. Such a potential dynamical formulation for the sOT problem can be used to seek optimal transportation paths with constraints in applications such as continent movement in geology.

\section{Acknowledgements}

Z. Cang’s work is supported by a startup grant from North Carolina State University and NSF grant DMS2151934. Q. Nie is supported by a NSF grant DMS1763272, a grant from Simons Foundation (594598, QN), and a NIH grant U01AR073159. Y. Zhao's work is supported by a grant from the Simons Foundation through Grant No. 357963 and NSF grant DMS-2142500.


\begin{thebibliography}{10}
\expandafter\ifx\csname url\endcsname\relax
  \def\url#1{\texttt{#1}}\fi
\expandafter\ifx\csname urlprefix\endcsname\relax\def\urlprefix{URL }\fi
\expandafter\ifx\csname href\endcsname\relax
  \def\href#1#2{#2} \def\path#1{#1}\fi

\bibitem{Monge_1781}
G.~Monge, Memoire sur la theorie des deblais et des remblais, Histoire de
  l'Academie Royale des Sciences~(666-704) (1781).

\bibitem{Kantorovitch_1942}
L.~Kantorovitch, On the translocation of masses, C. R. (Doklady) Acad. Sci.
  URSS (N.S.) 37~(199-201) (1942).

\bibitem{Brenier_1991}
Y.~Brenier, Polar factorization and monotone rearrangement of vector-valued
  functions, Comm. Pure Appl. Math. 44~(375-417) (1991).

\bibitem{Villani_2003}
C.~Villani, Topics in optimal transportation, American Mathematical Society,
  Providence, RI, 2003.

\bibitem{Frisch_2012}
U.~Frisch, S.~Matarrese, R.~Mohayaee, A.~Sobolevski, A reconstruction of the
  initial conditions of the universe by optimal mass transportation, Nature
  417~(260-262) (2012).

\bibitem{Cuturi_2013}
M.~Cuturi, Sinkhorn distances: Lightspeed computation of optimal transport,
  Advances in Neural Information Processing Systems 26~(2292-2300) (2013).

\bibitem{arjovsky2017wasserstein}
M.~Arjovsky, S.~Chintala, L.~Bottou, Wasserstein generative adversarial
  networks, in: International conference on machine learning, PMLR, 2017, pp.
  214--223.

\bibitem{courty2016optimal}
N.~Courty, R.~Flamary, D.~Tuia, A.~Rakotomamonjy, Optimal transport for domain
  adaptation, IEEE transactions on pattern analysis and machine intelligence
  39~(9) (2016) 1853--1865.

\bibitem{Ferradans_2014}
S.~Ferradans, N.~Papadakis, G.~Peyre, J.-F. Aujol, Regularized discrete optimal
  transport, SIAM J. Imaging Sciences 7~(1853-1882) (2014).

\bibitem{bauer2015diffeomorphic}
M.~Bauer, S.~Joshi, K.~Modin, Diffeomorphic density matching by optimal
  information transport, SIAM Journal on Imaging Sciences 8~(3) (2015)
  1718--1751.

\bibitem{karlsson2017generalized}
J.~Karlsson, A.~Ringh, Generalized sinkhorn iterations for regularizing inverse
  problems using optimal mass transport, SIAM Journal on Imaging Sciences
  10~(4) (2017) 1935--1962.

\bibitem{altschuler2017near}
J.~Altschuler, J.~Weed, P.~Rigollet, Near-linear time approximation algorithms
  for optimal transport via sinkhorn iteration, in: Proceedings of the 31st
  International Conference on Neural Information Processing Systems, NIPS'17,
  Curran Associates Inc., Red Hook, NY, USA, 2017, p. 1961–1971.

\bibitem{cuturi2014fast}
M.~Cuturi, A.~Doucet, Fast computation of wasserstein barycenters, in:
  International conference on machine learning, PMLR, 2014, pp. 685--693.

\bibitem{pmlr-v108-guo20a}
W.~Guo, N.~Ho, M.~Jordan,
  \href{https://proceedings.mlr.press/v108/guo20a.html}{Fast algorithms for
  computational optimal transport and wasserstein barycenter}, in: S.~Chiappa,
  R.~Calandra (Eds.), Proceedings of the Twenty Third International Conference
  on Artificial Intelligence and Statistics, Vol. 108 of Proceedings of Machine
  Learning Research, PMLR, 2020, pp. 2088--2097.
\newline\urlprefix\url{https://proceedings.mlr.press/v108/guo20a.html}

\bibitem{dvurechensky2018computational}
P.~Dvurechensky, A.~Gasnikov, A.~Kroshnin, Computational optimal transport:
  Complexity by accelerated gradient descent is better than by sinkhorn’s
  algorithm, in: International conference on machine learning, PMLR, 2018, pp.
  1367--1376.

\bibitem{lin2019efficient}
T.~Lin, N.~Ho, M.~Jordan, On efficient optimal transport: An analysis of greedy
  and accelerated mirror descent algorithms, in: International Conference on
  Machine Learning, PMLR, 2019, pp. 3982--3991.

\bibitem{guo2020fast}
W.~Guo, N.~Ho, M.~Jordan, Fast algorithms for computational optimal transport
  and wasserstein barycenter, in: International Conference on Artificial
  Intelligence and Statistics, PMLR, 2020, pp. 2088--2097.

\bibitem{chizat2018unbalanced}
L.~Chizat, G.~Peyr{\'e}, B.~Schmitzer, F.-X. Vialard, Unbalanced optimal
  transport: Dynamic and kantorovich formulations, Journal of Functional
  Analysis 274~(11) (2018) 3090--3123.

\bibitem{Chizat_2018}
L.~Chizat, G.~Peyre, B.~Schmitzer, F.-X. Vialard, Scaling algorithms for
  unbalanced optimal transport problems, Mathematics of Computation
  26~(2563-2609) (2013).

\bibitem{Benamou_2015}
J.~Benamou, G.~Carlier, M.~Cuturi, L.~Nenna, G.~Peyre, Iterative bregman
  projections for regularized transportation problems, SIAM J. Sci. Comput.
  37~(A1111-A1138) (2015).

\bibitem{bonneel2019spot}
N.~Bonneel, D.~Coeurjolly, Spot: sliced partial optimal transport, ACM
  Transactions on Graphics (TOG) 38~(4) (2019) 1--13.

\bibitem{chapel2020partial}
L.~Chapel, M.~Z. Alaya, G.~Gasso, Partial optimal tranport with applications on
  positive-unlabeled learning, Advances in Neural Information Processing
  Systems 33 (2020) 2903--2913.

\bibitem{Ganbo_2019}
W.~Ganbo, W.~Li, S.~Osher, M.~Puthawala, Unnormalized optimal transport, J.
  Comput. Phys. 399~(108940) (2019).

\bibitem{Lee_2021}
W.~Lee, R.~Lai, W.~Li, S.~Osher, Generalized unnormalized optimal transport and
  its fast algorithms, J. Comput. Phys. 436~(110041) (2021).

\bibitem{Brualdi_2006}
R.~A. Brualdi, Combinatorial Matrix Classes, Cambridge University Press, 2006.

\bibitem{Schrodinger_PhysMath1931}
E.~Schrodinger, Uber die umkehrung der naturgesetze, Phys. Math. 144 (1931)
  144--153.

\bibitem{Wilson_JTEP1969}
A.G.Wilson, The use of entropy maximizing models, in the theory of trip
  distribution, mode split and route split, J. Transport Econ. Pol. 3~(108-126)
  (1969).

\bibitem{Sinkhorn_AMS1964}
R.~Sinkhorn, A relationship between arbitrary positive matrices and doubly
  stochastic matrices, Ann. Math. Statist. 35 (1964) 876--879.

\bibitem{Sinkhorn_PJM1967}
R.~Sinkhorn, P.~Knopp, Concerning nonnegative matrices and doubly stochastic
  matrices, Pacific J. Math. 21 (1967) 343--348.

\bibitem{SInkhorn_AMM1967}
R.~Sinkhorn, Diagonal equivalence to matrices with prescribed row and column
  sums, Amer. Math. Monthly 74 (1967) 402--405.

\bibitem{Franklin_LAA1989}
J.~Franklin, J.~Lorentz, On the scaling of multidimensional matrices, Linear
  Algebra Appl. 114 (1989) 717--735.

\bibitem{Cominetti_1994}
R.~Cominetti, J.~S. Martin, Asymptotic analysis of the exponential penalty
  trajectory in linear programming, Mathematical Programming 67~(169-187)
  (1994).

\bibitem{Peyre_SIIS2015}
G.~Peyre, Entropic approximation of wasserstein gradient flows, SIAM. J. Imag.
  Sci. 8~(4) (2015) 2323--2351.

\bibitem{CuturiDoucet_2014}
M.~Cuturi, A.~Doucet, Fast computation of wasserstein barycenters, Proceedings
  of the 31st International Conference on Machine Learning (2014).

\bibitem{MAL-073}
G.~Peyré, M.~Cuturi, \href{http://dx.doi.org/10.1561/2200000073}{Computational
  optimal transport: With applications to data science}, Foundations and
  Trends® in Machine Learning 11~(5-6) (2019) 355--607.
\newblock \href {https://doi.org/10.1561/2200000073}
  {\path{doi:10.1561/2200000073}}.
\newline\urlprefix\url{http://dx.doi.org/10.1561/2200000073}

\bibitem{pitie2005n}
F.~Pitie, A.~C. Kokaram, R.~Dahyot, N-dimensional probability density function
  transfer and its application to color transfer, in: Tenth IEEE International
  Conference on Computer Vision (ICCV'05) Volume 1, Vol.~2, IEEE, 2005, pp.
  1434--1439.

\bibitem{ferradans2014regularized}
S.~Ferradans, N.~Papadakis, G.~Peyr{\'e}, J.-F. Aujol, Regularized discrete
  optimal transport, SIAM Journal on Imaging Sciences 7~(3) (2014) 1853--1882.

\bibitem{rabin2014adaptive}
J.~Rabin, S.~Ferradans, N.~Papadakis, Adaptive color transfer with relaxed
  optimal transport, in: 2014 IEEE International Conference on Image Processing
  (ICIP), IEEE, 2014, pp. 4852--4856.

\bibitem{umesh2012image}
P.~Umesh, Image processing in python, CSI Communications 23 (2012).

\bibitem{LiYinOsher_JSC2018}
W.~Li, P.~Yin, S.~Osher, Computations of optimal transport distance with fisher
  information regularization, J. Sci. Comput. 75 (2018) 1581--1595.

\bibitem{BenamouBrenier_NM2018}
J.~Benamou, Y.~Brenier, A computational fluid mechanics solution to the
  {M}onge-{K}antorovich mass transfer problem, Numerische Mathematik 84 (2000)
  375--393.

\bibitem{LiRyuOsherYinGangbo_JSC2018}
W.~Li, E.~Ryu, S.~Osher, W.~Yin, W.~Gangbo, A parallel method for earth
  mover’s distance, J. Sci. Comput. 75 (2018) 182--197.

\end{thebibliography}
\end{document}